\title{Quiddity sequences for $\mathrm{SL}_3$-frieze patterns}
\author{Jordan McMahon}
\begin{document}

\newtheorem{lm}{Lemma}[section]
\newtheorem{prop}[lm]{Proposition}
\newtheorem{conj}[lm]{Conjecture}
\newtheorem{eg}[lm]{Example}
\newtheorem*{claim}{Claim}
\newtheorem{satz}[lm]{Satz}

\newtheorem*{corollary}{Corollary}
\newtheorem{cor}[lm]{Corollary}
\newtheorem{theorem}[lm]{Theorem}
\newtheorem*{thm}{Theorem}

\theoremstyle{definition}
\newtheorem{defi}[lm]{Definition}
\newtheorem{defn}[lm]{Definition}

\newtheorem*{defini}{Definition}
\newtheorem*{definitionen}{Definitionen}
\newtheorem{ueb}{\"Ubungsbeispiel}
\newtheorem{bsp}[lm]{Beispiel}
\newtheorem{bspe}[lm]{Beispiele}
\newtheorem*{ex}{Beispiel}
\newtheorem*{exas}{Beispiele}
\newtheorem*{eigen}{Eigenschaften}
\newtheorem*{rem}{Remark}
\newtheorem{remark}{Remark}
\newtheorem{qu}{Question}


\newcommand{\sub}{\underline}



\begin{abstract}
In this article we introduce the idea of a superimposed triangulation and clarify how these triangulations may be used to understand quiddity sequences for $\mathrm{SL}_3$-frieze patterns.
\end{abstract}

\maketitle
\tableofcontents

\definecolor{light-gray}{gray}{0.8}

\section{Introduction}

The theory of cluster algebras, introduced by Fomin and Zelevinsky in \cite{fz1}, \cite{fz2}, is remarkable for its ability to bring together many different areas of research. Perhaps the best illustration of this powerful theory is the frieze pattern. Frieze patterns convey the bijection between triangulations of a polygon and clusters of the homogeneous coordinate ring $\mathbb{C}[\mathrm{Gr}(2,n)]$, whilst remaining easy to understand. The bijection between triangulations of polygons and frieze patterns with positive integers was shown by Conway and Coxeter in \cite{cc1}, see also \cite{mg} for a survey, and the essential ingredient in this bijection is the quiddity sequence. 

Generalisations of frieze patterns have been studied in various forms for decades, starting with the $\mathrm{SL}_3$-frieze patterns introduced in \cite{cr}. However, quiddity sequences for the $\mathrm{SL}_3$-frieze patterns have remained elusive. 
We denote the set of clusters of Pl\"ucker coordinates in the cluster structure of $\mathbb{C}[\mathrm{Gr}(k,n)]$ by $\mathcal{S}_{k,n}$. The dimer algebras associated to each such cluster are interesting in their own right, and recent studies include \cite{bkm}, \cite{dl},  \cite{kul}, \cite{mc}, \cite{pas}, \cite{pr}; see also \cite{bo} and \cite{lam} for their connections to other areas of research. 

For particular clusters in $\mathcal{S}_{k,n}$, whose dimer algebras are \emph{rectangular} (see Definition \ref{rectify}), we can use results of Scott \cite{sco} and Oh-Postnikov-Speyer \cite{ops} to define \emph{$(k-1)$-superimposed triangulations of a polygon} (Definition \ref{super}) as well as \emph{higher quiddity sequences} (Definition \ref{dollazz}). 

\begin{prop}[ Proposition \ref{supertri}]
Let $\mathcal{C}\in\mathcal{S}_{k,n}$ be a rectangular cluster. Then $\mathcal{C}$ induces a pair of $2$-superimposed triangulations of an $n$-gon.
\end{prop}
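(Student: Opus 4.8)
We are in the case $k=3$, so that ``$(k-1)$-superimposed'' reads ``$2$-superimposed''. The plan is to route the cluster data of $\mathcal{C}$ through the combinatorics of reduced plabic graphs (equivalently, Postnikov diagrams) and then read the triangulations off the strand diagram, using rectangularity to control planarity. By Scott \cite{sco} together with Oh--Postnikov--Speyer \cite{ops}, the cluster $\mathcal{C}$ is realised by a reduced plabic graph $G$ whose bounded faces carry the $3$-subsets indexing the Pl\"ucker coordinates of $\mathcal{C}$; these labels form a maximal weakly separated collection, and mutations of $\mathcal{C}$ correspond to square moves of $G$. The rectangularity hypothesis (Definition \ref{rectify}) is precisely what fixes the combinatorial type of $G$: its quiver is the $2\times(n-4)$ grid of mutable vertices with the $n$ frozen boundary vertices attached. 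I would start by fixing this model and recording the face labels explicitly inside the grid, since everything afterwards is bookkeeping on this picture.

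The trip permutation of $G$ is $i\mapsto i+3$, so each strand of the Postnikov diagram $D$ of $G$ joins the boundary marked points $i$ and $i+3$. Rectangularity lets one sort the strands of $D$ into two interleaved families, one per row of the grid; replacing every strand in a family by the chord of the $n$-gon through its two endpoints and adjoining the $n$ boundary edges gives two chord systems $T_1$ and $T_2$. The substance of the argument is to show that each $T_a$ is a genuine triangulation --- its chords are pairwise non-crossing and there are exactly $n-3$ of them --- and here weak separation of the face labels is the essential input, because for a non-rectangular cluster the two families need not be individually planar.

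Next, overlay $T_1$ and $T_2$ on a single $n$-gon and verify the axioms of Definition \ref{super}, so that $(T_1,T_2)$ is a $2$-superimposed triangulation; the compatibility conditions should drop out of the fact that, for a rectangular $D$, the two families of strands cross only in the prescribed pattern. Running the construction with the opposite orientation of $D$ --- equivalently, with the colour-swapped dual plabic graph $\bar{G}$, or via $\mathrm{Gr}(3,n)\cong\mathrm{Gr}(n-3,n)$ --- produces a second $2$-superimposed triangulation, and the two together are the pair in the statement (on the frieze side, these are the data read off the two ends of the associated $\mathrm{SL}_3$-frieze). I would also check here that the assignment is inverse to the map taking a pair of $2$-superimposed triangulations back to a rectangular cluster, so that no information is lost.

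The main obstacle is the planarity and the count in the second step. I expect the clean route is induction on $n$ (equivalently on the width $n-4$ of the grid): deleting the outermost column of the grid corresponds, on the geometric side, to erasing a boundary vertex of the $n$-gon together with the fan of triangles incident to it, reducing to the $(n-1)$-gon, with base cases $n=5,6$ checked by hand. The delicate point of the inductive step is to confirm that peeling off the column introduces no crossing in either $T_a$, and this is exactly where weak separation of the face labels is used.
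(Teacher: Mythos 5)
There is a genuine gap, and it begins with the objects you build your triangulations from. The arcs in Proposition \ref{supertri} are not obtained from the strands of the Postnikov diagram: a strand joins the boundary points $i$ and $i+3$, so each of your two ``chord systems'' would consist entirely of short chords of the form $[i,i+3]$, which cannot tile the $n$-gon. What the paper actually does is attach to each double-interval face label $I=\{i,\dots,j\}\sqcup\{l,\dots,m\}$ of $\mathcal{C}$ its \emph{lower arc} $(i,l)$ and its \emph{upper arc} $(j,m)$, sort the non-interval labels into the rows of the lattice interior (this is the content of Proposition \ref{clq}), and observe that the lower arcs of the $l$-th row form an $(l,k-l)$-tiling. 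The ``pair'' in the statement is then the lower-arc collection versus the upper-arc collection of the \emph{same} cluster, not the output of your construction run once for $D$ and once for its orientation reversal; you would still owe an argument that the latter coincides with the former. (Note also that the body of the paper proves the statement for general $k$, producing $(k-1)$-superimposed triangulations; your restriction to $k=3$ follows the introduction's phrasing but is narrower than Proposition \ref{supertri}.)

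Second, you have misread Definition \ref{super}. A $2$-superimposed triangulation is not two honest triangulations overlaid: each constituent $\mathcal{I}_j$ is a $(j,k-j)$-tiling, i.e.\ a tiling with one $(j+2)$-gon, one $(k-j+2)$-gon and the remaining tiles triangles, so for $k=3$ each row contributes only $n-4$ arcs, not the $n-3$ you assert; and the compatibility between consecutive tilings is a \emph{weakened} non-crossing condition, under which crossings are permitted provided the crossing arcs have a pair of adjacent endpoints on the boundary. The paper points out explicitly, just after Example \ref{ball}, that the arcs $(2,6)$ and $(3,7)$ do cross even though they come from weakly separated subsets. So the ``planarity'' you identify as the main obstacle is both false as stated (across rows) and not what needs proving. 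The two things that do need proving are: (i) within each row the arcs tile the polygon, which the paper extracts from the structure result Proposition \ref{clq}; and (ii) between consecutive rows, weak separation of the $k$-subsets forces any crossing of arcs to have adjacent endpoints, which is a two-line check. Neither ingredient appears in usable form in your outline, and the proposed induction on $n$ and the bijectivity verification are not needed and do not repair these points.
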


This restricts to the following description, where a cluster of quadrilateral type refers to the quadrilateral arrangements found in \cite{sco}, see also Subsection \ref{cluck}. In this case we have a nice generalisation of the classical quiddity sequences introduced by Conway and Coxeter \cite{cc1}.

\begin{theorem}[Theorem \ref{quid}]
Let $\mathcal{C}\in\mathcal{S}_{3,n}$ be a rectangular cluster of quadrilateral type. Then 
the first (respectively final) non-trivial row of the associated $\mathrm{SL}_3$-frieze pattern is a quiddity sequence of order $n$ $(a_1,a_2,\cdots,a_n)$ (respectively $(b_1,b_2,\cdots,b_n)$) obtained by defining each $a_i$ (respectively $b_i$) to be one plus the number of arcs containing $i$ in one of the respective $2$-superimposed triangulations. 
\end{theorem}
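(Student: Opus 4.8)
The plan is to run a Conway--Coxeter-style argument \cite{cc1} layer by layer along the superimposed triangulations produced by Proposition \ref{supertri}. Recall that the $\mathrm{SL}_3$-frieze attached to $\mathcal C$ records the values of all Pl\"ucker coordinates of $\mathbb{C}[\mathrm{Gr}(3,n)]$ after every Pl\"ucker coordinate belonging to the cluster $\mathcal C$ has been specialised to $1$; since the frozen variables are the consecutive triples $P_{i,i+1,i+2}$, these furnish the bordering rows of $1$s, and the remaining entries are forced by the adjacent $3\times3$-minor frieze rule. The first task is to pin down the Pl\"ucker coordinates occupying the first non-trivial row: this row is indexed cyclically by the vertices $i$ of the $n$-gon, and the entry $a_i$ is the specialised value of a Pl\"ucker coordinate $P_{J_i}$ with $J_i$ depending only on $i$. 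Using Scott's quadrilateral labelling \cite{sco} (Subsection \ref{cluck}) together with the construction behind Proposition \ref{supertri}, I would then show that the exchange relations among the Pl\"ucker coordinates that determine $a_i$ from the frozen border are naturally indexed by the arcs of the first of the two $2$-superimposed triangulations incident to $i$; symmetrically, the final non-trivial row $(b_1,\dots,b_n)$ is controlled, vertex by vertex, by the second $2$-superimposed triangulation.

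Granting this, the value $a_i$ is computed locally at $i$ by a telescoping argument. The arcs of the first superimposed triangulation incident to $i$, together with the two boundary edges $\{i-1,i\}$ and $\{i,i+1\}$, cut out a fan of regions around $i$ whose number is exactly $1$ plus the number of arcs containing $i$, counted in that superimposed triangulation. The quadrilateral structure of $\mathcal C$ arranges the relevant Pl\"ucker coordinates along this fan into a linear, non-branching chain of exchange relations; iterating the relation telescopes, and, using that the two boundary edges contribute the frozen value $1$ at either end of the chain, the specialised value $a_i$ comes out equal to the length of the chain, i.e.\ the number of regions of the fan. This is the $\mathrm{SL}_3$-analogue, and a direct generalisation, of the classical local computation underlying the Conway--Coxeter theorem \cite{cc1} (see also \cite{mg}). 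The statement for $(b_1,\dots,b_n)$ follows the same way after invoking the reflection symmetry of $\mathrm{SL}_3$-frieze patterns.

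I expect the main obstacle to be making the indexing of the first paragraph precise --- in particular, showing that the chain of exchange relations computing $a_i$ stays inside the first of the two layers of the rectangular cluster, so that the first non-trivial row ``sees'' only the first $2$-superimposed triangulation (and, dually, the last row only the second), with no mixing between layers. This is precisely where the hypothesis that $\mathcal C$ is rectangular of quadrilateral type is needed: the plabic graph of \cite{ops} attached to $\mathcal C$ splits into two stacked layers, and one must check that the faces meeting the frieze border are exactly the frozen faces $P_{i,i+1,i+2}$. I would verify this by inspecting the quadrilateral arrangement directly; should that become unwieldy, a fallback is induction on the number of arcs in a layer, with the fan arrangement as an explicitly computable base case and with a single flip inside one layer changing the first non-trivial row by the expected $\pm1$ while leaving the border rows and the other layer untouched. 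Once this decoupling and the telescoping computation are in place, the theorem follows.
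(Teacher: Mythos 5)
Your core idea matches the paper's proof: identify each entry of the first non-trivial row with a single Pl\"ucker coordinate (a near-consecutive triple with one gap at $i$), then compute it by a telescoping chain of three-term Pl\"ucker exchange relations in which every arc through $i$ contributes exactly $+1$, the chain being anchored at frozen consecutive coordinates equal to $1$. The paper executes this concretely: it writes down Scott's explicit list of the members of the quadrilateral cluster ($\{i,i+1,n-i\}$, $\{i,i+1,n-i+1\}$, $\{i,n-i,n-i+1\}$, $\{i,n-i+1,n-i+2\}$), and for $L(i)=4$ carries out the four-step telescope $p_{(i-1)(i+1)(i+2)}=p_{(i-1)(i+1)(n-i)}+1=\cdots=5$ by multiplying at each step by a cluster variable specialised to $1$ and applying the Pl\"ucker relation, then treats $L(i)=3,2,1$ by the same degeneration. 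Where your plan goes astray is the ``decoupling'' you flag as the main obstacle: the plabic graph does \emph{not} split into two stacked layers with the first frieze row seeing only one of them. Every member of $\mathcal{C}$ carries \emph{both} a lower and an upper arc, and the two $2$-superimposed triangulations are the collections of lower arcs and of upper arcs of the \emph{same} subsets; moreover the telescoping chain computing a single $a_i$ visibly traverses all rows of the lattice interior (the four subsets above lie in different rows). So the separation between the first and last non-trivial rows is not a layer-decomposition of the cluster but the choice of which endpoint of each double interval the gap at $i$ interacts with, and attempting to prove your ``no mixing between layers'' claim as stated would fail. Since the quadrilateral hypothesis makes the cluster completely explicit, the direct computation is both available and shorter than the flip-induction fallback you propose; I would replace the layer-decoupling step with that explicit description and keep your telescoping argument, which is then essentially the paper's proof.
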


In general we can obtain a quiddity sequence from either a row of the $\mathrm{SL}_3$-frieze pattern, or from the $2$-superimposed triangulation, but they may not be equal. It is also of note that rectangular-shaped quivers, albeit on a torus, have been studied in \cite{beil}. The description of a cluster of Pl\"ucker coordinates in the coordinate ring $\mathbb{C}[\mathrm{Gr}(3,n)]$ as superimposing triangulations on a polygon contrasts with the description that tilting modules of higher Auslander algebras of linearly oriented type $A$, that are summands of the cluster-tilting modules, correspond to triangulations of a cyclic polytope \cite{ot}.

\section{Frieze Patterns}
\subsection{Coxeter's Frieze Patterns}

\begin{defn}\label{friezedef}
In the sense of Coxeter, a \emph{frieze pattern} is an infinite array of numbers that satisfies the following conditions:
\begin{itemize}
\item The array has finitely many rows .
\item The top and bottom rows consist of only zeroes; the second and penultimate rows consist of only ones.
\item Consecutive rows are displayed with a shift, and every diamond $$\begin{tikzpicture}
\node(a) at (0,0){$a$};
\node(b) at (-0.5,-0.5){$b$};
\node(c) at (0.5,-0.5){$c$};
\node(d) at (0,-1){$d$};
\end{tikzpicture}$$
satisfies the \emph{unimodular rule}: $bc-ad=1$. 
\end{itemize}
\end{defn}
Note that we omit, by convention, the top and bottom rows of zeroes from the frieze pattern.
\begin{eg}
Examples of Coxeter's frieze patterns include:

$$\begin{tikzpicture}
\node(a) at (4,0){$1$};
\node(b) at (4.4,0){$1$};
\node(c) at (0.8,0){$1$};
\node(d) at (1.2,0){$1$};
\node(e) at (1.6,0){$1$};
\node(f) at (2,0){$1$};
\node(g) at (2.4,0){$1$};
\node(h) at (2.8,0){$1$};
\node(i) at (3.2,0){$1$};
\node(j) at (3.6,0){$1$};
\node(oa) at (4.2,0.4){$1$};
\node(ob) at (0.6,0.4){$3$};
\node(oc) at (1,0.4){$1$};
\node(od) at (1.4,0.4){$2$};
\node(oe) at (1.8,0.4){$2$};
\node(of) at (2.2,0.4){$1$};
\node(og) at (2.6,0.4){$3$};
\node(oh) at (3,0.4){$1$};
\node(oi) at (3.4,0.4){$2$};
\node(oj) at (3.8,0.4){$2$};
\node(pa) at (4,0.8){$1$};
\node(pb) at (0.4,0.8){$2$};
\node(pc) at (0.8,0.8){$2$};
\node(pd) at (1.2,0.8){$1$};
\node(pe) at (1.6,0.8){$3$};
\node(pf) at (2,0.8){$1$};
\node(pg) at (2.4,0.8){$2$};
\node(ph) at (2.8,0.8){$2$};
\node(pi) at (3.2,0.8){$1$};
\node(pj) at (3.6,0.8){$3$};
\node(qa) at (0.2,1.2){$1$};
\node(qb) at (0.6,1.2){$1$};
\node(qc) at (1,1.2){$1$};
\node(qd) at (1.4,1.2){$1$};
\node(qe) at (1.8,1.2){$1$};
\node(qf) at (2.2,1.2){$1$};
\node(qg) at (2.6,1.2){$1$};
\node(qh) at (3,1.2){$1$};
\node(qi) at (3.4,1.2){$1$};
\node(qj) at (3.8,1.2){$1$};
\end{tikzpicture}$$

$$\begin{tikzpicture}
\node(a) at (4,0){$1$};
\node(b) at (4.4,0){$1$};
\node(c) at (0.8,0){$1$};
\node(d) at (1.2,0){$1$};
\node(e) at (1.6,0){$1$};
\node(f) at (2,0){$1$};
\node(g) at (2.4,0){$1$};
\node(h) at (2.8,0){$1$};
\node(i) at (3.2,0){$1$};
\node(j) at (3.6,0){$1$};
\node(oa) at (4.2,0.4){$1$};
\node(ob) at (0.6,0.4){$3$};
\node(oc) at (1,0.4){$1$};
\node(od) at (1.4,0.4){$3$};
\node(oe) at (1.8,0.4){$1$};
\node(of) at (2.2,0.4){$3$};
\node(og) at (2.6,0.4){$1$};
\node(oh) at (3,0.4){$3$};
\node(oi) at (3.4,0.4){$1$};
\node(oj) at (3.8,0.4){$3$};
\node(pa) at (4,0.8){$2$};
\node(pb) at (0.4,0.8){$2$};
\node(pc) at (0.8,0.8){$2$};
\node(pd) at (1.2,0.8){$2$};
\node(pe) at (1.6,0.8){$2$};
\node(pf) at (2,0.8){$2$};
\node(pg) at (2.4,0.8){$2$};
\node(ph) at (2.8,0.8){$2$};
\node(pi) at (3.2,0.8){$2$};
\node(pj) at (3.6,0.8){$2$};
\node(qa) at (0.2,1.2){$3$};
\node(qb) at (0.6,1.2){$1$};
\node(qc) at (1,1.2){$3$};
\node(qd) at (1.4,1.2){$1$};
\node(qe) at (1.8,1.2){$3$};
\node(qf) at (2.2,1.2){$1$};
\node(qg) at (2.6,1.2){$3$};
\node(qh) at (3,1.2){$1$};
\node(qi) at (3.4,1.2){$3$};
\node(qj) at (3.8,1.2){$1$};
\node(ra) at (0,1.6){$1$};
\node(rb) at (0.4,1.6){$1$};
\node(rc) at (0.8,1.6){$1$};
\node(rd) at (1.2,1.6){$1$};
\node(re) at (1.6,1.6){$1$};
\node(rf) at (2,1.6){$1$};
\node(rg) at (2.4,1.6){$1$};
\node(rh) at (2.8,1.6){$1$};
\node(ri) at (3.2,1.6){$1$};
\node(rj) at (3.6,1.6){$1$};
\end{tikzpicture}$$

$$\begin{tikzpicture}
\node(a) at (4,0){$1$};
\node(b) at (4.4,0){$1$};
\node(c) at (0.8,0){$1$};
\node(d) at (1.2,0){$1$};
\node(e) at (1.6,0){$1$};
\node(f) at (2,0){$1$};
\node(g) at (2.4,0){$1$};
\node(h) at (2.8,0){$1$};
\node(i) at (3.2,0){$1$};
\node(j) at (3.6,0){$1$};
\node(oa) at (4.2,0.4){$2$};
\node(ob) at (0.6,0.4){$4$};
\node(oc) at (1,0.4){$1$};
\node(od) at (1.4,0.4){$2$};
\node(oe) at (1.8,0.4){$2$};
\node(of) at (2.2,0.4){$2$};
\node(og) at (2.6,0.4){$1$};
\node(oh) at (3,0.4){$4$};
\node(oi) at (3.4,0.4){$1$};
\node(oj) at (3.8,0.4){$2$};
\node(pa) at (4,0.8){$3$};
\node(pb) at (0.4,0.8){$3$};
\node(pc) at (0.8,0.8){$3$};
\node(pd) at (1.2,0.8){$1$};
\node(pe) at (1.6,0.8){$3$};
\node(pf) at (2,0.8){$3$};
\node(pg) at (2.4,0.8){$1$};
\node(ph) at (2.8,0.8){$3$};
\node(pi) at (3.2,0.8){$3$};
\node(pj) at (3.6,0.8){$1$};
\node(qa) at (0.2,1.2){$2$};
\node(qb) at (0.6,1.2){$2$};
\node(qc) at (1,1.2){$2$};
\node(qd) at (1.4,1.2){$1$};
\node(qe) at (1.8,1.2){$4$};
\node(qf) at (2.2,1.2){$1$};
\node(qg) at (2.6,1.2){$2$};
\node(qh) at (3,1.2){$2$};
\node(qi) at (3.4,1.2){$2$};
\node(qj) at (3.8,1.2){$1$};
\node(ra) at (0,1.6){$1$};
\node(rb) at (0.4,1.6){$1$};
\node(rc) at (0.8,1.6){$1$};
\node(rd) at (1.2,1.6){$1$};
\node(re) at (1.6,1.6){$1$};
\node(rf) at (2,1.6){$1$};
\node(rg) at (2.4,1.6){$1$};
\node(rh) at (2.8,1.6){$1$};
\node(ri) at (3.2,1.6){$1$};
\node(rj) at (3.6,1.6){$1$};
\end{tikzpicture}$$
\end{eg}

A frieze pattern is said to be of width $n$ if it has $n$ rows strictly between the border rows of ones at the top and bottom.

\subsection{$\mathrm{SL}_3$-Frieze Patterns}\label{slfree}

The easiest way to understand the unimodular rule is to interpret it as a $2\times 2$-matrix determinant. So it makes sense to question whether there are frieze patterns defined by $3\times 3$-matrix determinants. As introduced in \cite{cr}, an \emph{$\mathrm{SL}_3$-frieze pattern} is defined as in Definition \ref{friezedef}, with the only changes being an extra row of zeroes at each of the top and bottom of the frieze, and that each $3\times 3$-matrix has determinant one. Examples are given in Example \ref{fig1ii}. By convention, we again neglect to write the zeroes in the pattern. 

\begin{eg}
Examples of $\mathrm{SL}_3$-friezes include\label{fig1ii}

$$\begin{tikzpicture}
\node(a) at (4,0){$1$};
\node(b) at (4.4,0){$1$};
\node(c) at (0.8,0){$1$};
\node(d) at (1.2,0){$1$};
\node(e) at (1.6,0){$1$};
\node(f) at (2,0){$1$};
\node(g) at (2.4,0){$1$};
\node(h) at (2.8,0){$1$};
\node(i) at (3.2,0){$1$};
\node(j) at (3.6,0){$1$};
\node(oa) at (4.2,0.4){$2$};
\node(ob) at (0.6,0.4){$2$};
\node(oc) at (1,0.4){$4$};
\node(od) at (1.4,0.4){$1$};
\node(oe) at (1.8,0.4){$2$};
\node(of) at (2.2,0.4){$4$};
\node(og) at (2.6,0.4){$1$};
\node(oh) at (3,0.4){$2$};
\node(oi) at (3.4,0.4){$4$};
\node(oj) at (3.8,0.4){$1$};
\node(pa) at (4,0.8){$1$};
\node(pb) at (0.4,0.8){$1$};
\node(pc) at (0.8,0.8){$4$};
\node(pd) at (1.2,0.8){$2$};
\node(pe) at (1.6,0.8){$1$};
\node(pf) at (2,0.8){$4$};
\node(pg) at (2.4,0.8){$2$};
\node(ph) at (2.8,0.8){$1$};
\node(pi) at (3.2,0.8){$4$};
\node(pj) at (3.6,0.8){$2$};
\node(qa) at (0.2,1.2){$1$};
\node(qb) at (0.6,1.2){$1$};
\node(qc) at (1,1.2){$1$};
\node(qd) at (1.4,1.2){$1$};
\node(qe) at (1.8,1.2){$1$};
\node(qf) at (2.2,1.2){$1$};
\node(qg) at (2.6,1.2){$1$};
\node(qh) at (3,1.2){$1$};
\node(qi) at (3.4,1.2){$1$};
\node(qj) at (3.8,1.2){$1$};
\end{tikzpicture}$$

$$\begin{tikzpicture}
\node(a) at (4,0){$1$};
\node(b) at (4.4,0){$1$};
\node(c) at (0.8,0){$1$};
\node(d) at (1.2,0){$1$};
\node(e) at (1.6,0){$1$};
\node(f) at (2,0){$1$};
\node(g) at (2.4,0){$1$};
\node(h) at (2.8,0){$1$};
\node(i) at (3.2,0){$1$};
\node(j) at (3.6,0){$1$};
\node(oa) at (4.2,0.4){$1$};
\node(ob) at (0.6,0.4){$2$};
\node(oc) at (1,0.4){$3$};
\node(od) at (1.4,0.4){$2$};
\node(oe) at (1.8,0.4){$1$};
\node(of) at (2.2,0.4){$6$};
\node(og) at (2.6,0.4){$1$};
\node(oh) at (3,0.4){$2$};
\node(oi) at (3.4,0.4){$3$};
\node(oj) at (3.8,0.4){$2$};
\node(pa) at (4,0.8){$1$};
\node(pb) at (0.4,0.8){$1$};
\node(pc) at (0.8,0.8){$3$};
\node(pd) at (1.2,0.8){$3$};
\node(pe) at (1.6,0.8){$1$};
\node(pf) at (2,0.8){$3$};
\node(pg) at (2.4,0.8){$3$};
\node(ph) at (2.8,0.8){$1$};
\node(pi) at (3.2,0.8){$3$};
\node(pj) at (3.6,0.8){$3$};
\node(qa) at (0.2,1.2){$1$};
\node(qb) at (0.6,1.2){$1$};
\node(qc) at (1,1.2){$1$};
\node(qd) at (1.4,1.2){$1$};
\node(qe) at (1.8,1.2){$1$};
\node(qf) at (2.2,1.2){$1$};
\node(qg) at (2.6,1.2){$1$};
\node(qh) at (3,1.2){$1$};
\node(qi) at (3.4,1.2){$1$};
\node(qj) at (3.8,1.2){$1$};
\end{tikzpicture}$$
\end{eg}

\section{Background}
\subsection{Pl\"ucker Relations}
Recall that the Grassmannian of all $k$-dimensional subspaces of $\mathbb{C}^n$, $\mathrm{Gr}(k,n)$, can be embedded into the projective space $\mathbb{P}(\wedge^k(\mathbb{C}^n))$ via the Pl\"ucker embedding. The coordinates of $\wedge^k(\mathbb{C}^n)$ are called the \emph{Pl\"ucker coordinates} and are indexed by the $k$-subsets $I=\{i_1,i_2,\cdots,i_k\}\subset\{1,\cdots ,n\},$ where $1\leq i_1\leq i_2\leq \cdots\leq i_k\leq n$. The coordinate defined by an ordered subset $\{i_1,i_2,\cdots,i_k\}$ will be denoted $p_{i_1i_2\cdots i_k}$. 

The ordering of elements in a $k$-multiset may not always be known; the definition of the Pl\"ucker coordinates may be extended to allow for this. By convention, rearrangements of Pl\"ucker coordinates are treated by setting 
\begin{align} p_{i_1\cdots i_ri_s\cdots i_k}=-p_{i_1\cdots i_si_r\cdots i_k}.\end{align}
 In particular, if $i_r=i_s$ then $p_{i_1\cdots i_ri_r\cdots i_k}=0.$
The Pl\"ucker embedding satisfies the determinantal \emph{Pl\"ucker relations}. In the case $k=2$, the Pl\"ucker relations are 
$$p_{ac}p_{bd}=p_{ab}p_{cd}+p_{ad}p_{bc},$$
for all $1\leq a<b<c<d\leq n$. For a $(k-1)$-multiset $I=\{i_1,\cdots,i_{k-1}\}\subset \{1,2,\cdots,n\}$, let $p_{Ii_{k}}$ be the Pl\"ucker coordinate for the $k$-multiset $\{i_1,\cdots,i_{k-1},i_k\}$; $p_{Ii_{k}i_{k+1}}$  the Pl\"ucker coordinate for the $k+1$-multiset $\{i_1,\cdots,i_{k-1},i_k, i_{k+1}\}$ and so on.
Then, more generally, the Pl\"ucker relations for $k>2$ are generated by the relations \begin{align}
p_{Iac}p_{Ibd}=p_{Iab}p_{Icd}+p_{Iad}p_{Ibc},
\end{align} where $I$ is a $(k-2)$-muliset of $\{1,\cdots, n\}$. Another set of generating relations for the $(k,n)$-Pl\"ucker relations is the set of relations

\begin{align}
\sum_{r=0}^k(-1)^np_{i_1i_2\cdots i_{k-1}j_r}p_{j_0\cdots \widehat{j_r}\cdots j_k}=0
\end{align}
 where $1\leq i_0<i_1<\cdots<i_{k-1}\leq n$ and $1\leq j_0< j_1< \cdots j_k\leq n$.
\subsection{Alternating Strand Diagrams}

Scott's work on Grassmannian cluster algebras in \cite{sco} is vital to the understanding of $\mathrm{SL}_k$-frieze patterns. Underlying the proof that the homogeneous coordinate ring of a Grassmannian $\mathrm{Gr}(k,n)$ is a cluster algebra are objects called alternating strand diagrams. 

\begin{defn}

Two $k$-subsets $I$ and $J$ are said to be \emph{non-crossing} (also referred to as \emph{weakly separated} in some articles such as \cite{ops}
) if there do not exist elements $s<t<u<v$ (ordered modulo $n$) where $s,u\in I$, $s,u\notin J$ and $t,v\in J$, $t,v\notin I$. A \emph{cluster of Pl\"ucker coordinates in the cluster structure of $\mathrm{Gr}(k,n)$} is a maximal collection of pairwise non-crossing $k$-subsets of $\{1,2,\cdots,n\}$. It was proven in \cite{dkk}, \cite{ops} that a collection of pairwise non-crossing $k$-subsets of $\{1,2,\cdots,n\}$ is maximal if and only if it has $(k-1)(n-k-1)+n$ members. 
\end{defn}
Note that there is bijection between maximal collections of pairwise non-crossing $k$-subsets of $\{1,2,\cdots,n\}$ and maximal collections of pairwise non-crossing $(n-k)$-subsets of $\{1,2,\cdots,n\}$ obtained by replacing each subset in the collection with its complement. 

Let $C$ be a disc with $n$ vertices $\{1,2,\cdots, n\}=:C_1$ prescribed clockwise around its boundary circle. A {\em $(k,n)$-alternating strand diagram} $D$ consists of $n$ directed curves called {\em strands} between the points of $C_1$, such that at each vertex $i\in\left\{1,\cdots, n\right\}$ there is a single strand starting at $i$, which then ends at the vertex $i+k\ (\mathrm{mod}\ n)$, subject to the conditions:

\begin{itemize}
\item Only two strands may cross at a given point, and all crossings are transversal.
\item There are finitely many crossing points.
\item Proceeding along a given strand, the other strands crossing it alternate between crossing it left to right and right to left.
\item A strand cannot intersect itself.
\item If two strands intersect at points $x$ and $y$ (where $x$ an $y$ are also allowed to be points along the boundary), then one strand is oriented from $x$ to $y$ and the other from $y$ to $x$.
\end{itemize}

If in addition, upon proceeding along a given strand, no other strand crosses it twice in a row, then the alternating strand diagram is said to be \emph{reduced}.
Note that alternating strand diagrams must be considered up to isotopy: two alternating strand diagrams are said to be equivalent if they can be obtained from each other via ``twists". Any alternating stand diagram can be obtained from one of reduced type via twists, and henceforth we will assume all alternating strand diagrams are of reduced type. For more details on twists and dimer algebras arising from alternating strand diagrams, see \cite{bkm}. 

\begin{eg}\label{asd}
The following is an example of a (reduced) $(2,5)$-alternating strand diagram, $D$.

$$
\begin{tikzpicture}[scale=2]
\draw (0,0) circle(1cm);
\node(A) at (0.951,0.309){$\bullet$};
\node(B) at (0,1){$\bullet$};
\node(C) at (-0.951,0.309){$\bullet$};
\node(D) at (-0.5878,-0.809){$\bullet$};
\node(E) at (0.5878,-0.809){$\bullet$};
\node(a) at (1.1,0.4){$1$};
\node(b) at (0,1.2){$5$};
\node(c) at (-1.1,0.4){$4$};
\node(d) at (-0.8,-0.9){$3$};
\node(e) at (0.8,-0.9){$2$};
\path[<-,dotted,bend left=50]
(A) edgenode[above]{} (C)
(C) edgenode[above]{} (E);
\path[<-,dotted,bend right=30]
(E) edgenode[above]{} (B)
(B) edgenode[above]{} (D);
\path[<-,dotted,bend left=50]
(D) edgenode[above]{} (A);
\end{tikzpicture}$$

\end{eg}

Recall that a quiver $Q$ is a directed graph with a set $Q_0$ of vertices and a set $Q_1$ of arrows. 

\begin{defn}\label{label}
Let $D$ be a $(k,n)$-alternating strand diagram. The corresponding {\em dimer algebra associated with $D$}, $A(D):=kQ(D)/I(D)$ is defined as follows. The quiver $Q(D)$ has a vertex for each region of $D$ that is bounded by strands which have alternating orientation, and arrows corresponding to the intersection points of two alternating regions. As we are assuming that $D$ is a reduced $(k,n)$-alternating strand diagram, there can be no two-cycles in the quiver $Q(D)$. There is a set of boundary arrows which connect the vertices on the boundary of the disc, with orientation depending on the order of the two incident strands. Each vertex has a \emph{label} given by a $k$-subset $I$ of $\{1,2,\cdots,n\}$; each element $i\in I$  given by a strand from $i$ to $i+k$ that contains the given region on its left. By convention, each label $\{i_1,i_2,\cdots i_k\}$ will be displayed as $i_1i_2\cdots i_k$. 

Every non-boundary arrow $\alpha$ in $Q(D)$ is contained in two cycles, one clockwise and one anti-clockwise. These cycles are $\alpha\rho_\alpha^+$ and $\alpha\rho_\alpha^-$ respectively. Set $$I(D)=\langle \rho_\alpha^+-\rho_\alpha^-|\alpha \text{ a non-boundary arrow in $Q(D)$}\rangle.$$ 
\end{defn}

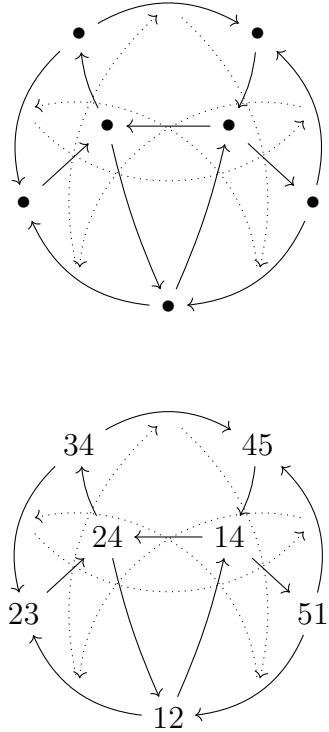
\begin{figure}
\caption{For the (2,5)-alternating strand diagram $D$ in Example \ref{asd}, the quiver $Q(D)$ is as follows. Below, we add labels to each vertex.}\label{asde}
$$\begin{tikzpicture} 
[scale=2]
\node(A) at (0.951,0.309){};
\node(B) at (0,1){};
\node(C) at (-0.951,0.309){};
\node(D) at (-0.5878,-0.809){};
\node(E) at (0.5878,-0.809){};
\node(F) at (0.5878,0.809){$\bullet$};
\node(G) at (-0.5878,0.809){$\bullet$};
\node(H) at (-0.9511,-0.3090){$\bullet$};
\node(I) at (0,-1){$\bullet$};
\node(J) at (0.9511,-0.3090){$\bullet$};
\node(K) at (-0.4,0.2){$\bullet$};
\node(L) at (0.4,0.2){$\bullet$};
\path[<-,dotted,bend left=50]
(A) edgenode[above,]{} (C)
(C) edgenode[above]{} (E);
\path[<-,dotted,bend right=30]
(E) edgenode[above]{} (B)
(B) edgenode[above]{} (D);
\path[<-,dotted,bend left=50]
(D) edgenode[above]{} (A);
\path[<-,bend right=10] (L)edgenode[above]{} (F);
\path[<-] (K)edgenode[above]{} (L);
\path[<-, bend left=5] (L)edgenode[above]{} (I);
\path[<-] (J)edgenode[above]{} (L);
\path[<-] (K)edgenode[above]{} (H);
\path[<-,bend right=10] (G)edgenode[above]{} (K);
\path[<-,bend left=5] (I)edgenode[above]{} (K);
\path[<-,bend right=30]
(F) edgenode[above]{} (G);
\path[<-,bend left=30]
(H) edgenode[above]{} (G);
\path[<-,bend right=30]
(H) edgenode[above]{} (I);
\path[<-,bend right=30]
(I) edgenode[above]{} (J);
\path[<-,bend left=30]
(F) edgenode[above]{} (J);
\end{tikzpicture}$$

$$\begin{tikzpicture} 
[scale=2]
\node(A) at (0.951,0.309){};
\node(B) at (0,1){};
\node(C) at (-0.951,0.309){};
\node(D) at (-0.5878,-0.809){};
\node(E) at (0.5878,-0.809){};
\node(F) at (0.5878,0.809){$45$};
\node(G) at (-0.5878,0.809){$34$};
\node(H) at (-0.9511,-0.3090){$23$};
\node(I) at (0,-1){$12$};
\node(J) at (0.9511,-0.3090){$51$};
\node(K) at (-0.4,0.2){$24$};
\node(L) at (0.4,0.2){$14$};
\path[<-,dotted,bend left=50]
(A) edgenode[above,]{} (C)
(C) edgenode[above]{} (E);
\path[<-,dotted,bend right=30]
(E) edgenode[above]{} (B)
(B) edgenode[above]{} (D);
\path[<-,dotted,bend left=50]
(D) edgenode[above]{} (A);
\path[<-,bend right=10] (L)edgenode[above]{} (F);
\path[<-] (K)edgenode[above]{} (L);
\path[<-, bend left=5] (L)edgenode[above]{} (I);
\path[<-] (J)edgenode[above]{} (L);
\path[<-] (K)edgenode[above]{} (H);
\path[<-,bend right=10] (G)edgenode[above]{} (K);
\path[<-,bend left=5] (I)edgenode[above]{} (K);
\path[<-,bend right=30]
(F) edgenode[above]{} (G);
\path[<-,bend left=30]
(H) edgenode[above]{} (G);
\path[<-,bend right=30]
(H) edgenode[above]{} (I);
\path[<-,bend right=30]
(I) edgenode[above]{} (J);
\path[<-,bend left=30]
(F) edgenode[above]{} (J);
\end{tikzpicture}$$
\end{figure}

\begin{theorem}\label{scott} \cite{sco}
There is a bijection between the clusters in $\mathcal{S}_{k,n}$ and (reduced) $(k,n)$-alternating strand diagrams. 
\end{theorem}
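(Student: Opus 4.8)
The plan is to construct explicit maps in both directions and verify they are mutually inverse; in the paper this is of course just Scott's theorem \cite{sco}, but here is how I would reconstruct the argument. Starting from a reduced $(k,n)$-alternating strand diagram $D$, I would form the collection $\mathcal{L}(D)$ of labels of its alternating regions exactly as in Definition \ref{label}: the label of a region $R$ is the set of strand-indices $i$ whose strand (from $i$ to $i+k$) has $R$ on its left. The first task is well-definedness, namely that every alternating region is labelled by a genuine $k$-subset of $\{1,\dots,n\}$. This is a local computation: passing through a single crossing toggles only that one strand's ``on the left'' status, and the alternating-crossing condition along each strand, together with the fact that every strand runs from $i$ to $i+k$, forces by a winding/parity count around the boundary that each alternating region is seen on the left by exactly $k$ strands. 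The $n$ boundary regions get the cyclic-interval labels $\{i+1,\dots,i+k\}$, accounting for the frozen coordinates.

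Next I would show that $\mathcal{L}(D)$ is a maximal collection of pairwise non-crossing $k$-subsets, i.e.\ a cluster in $\mathcal{S}_{k,n}$. For the non-crossing property I would argue by contradiction: if two labels $I,J$ witnessed a crossing pattern $s<t<u<v$ (cyclically) with $s,u\in I\setminus J$ and $t,v\in J\setminus I$, then tracing the four strands indexed by $s,t,u,v$ and the regions they bound would force some strand to cross another twice in a row, or to cross itself, or to violate the left-to-right/right-to-left alternation — each possibility contradicting that $D$ is a reduced ASD. For maximality I would count the alternating regions via an Euler-characteristic argument on the disc: $D$ has $n$ boundary marked points and only transversal double crossings in the interior, and the reduced condition pins down the crossing number; feeding these into $V-E+F=1$ yields exactly $(k-1)(n-k-1)+n$ alternating regions, so $\mathcal{L}(D)$ is maximal by the cardinality criterion of \cite{dkk}, \cite{ops}.

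Injectivity is then short: two alternating regions adjacent across an arrow of $Q(D)$ have labels of the form $K\cup\{a\}$ and $K\cup\{b\}$, so the poset of labels determines the quiver $Q(D)$ and, via the strand-tracing rule, the isotopy class of $D$. Surjectivity is the substantial point and the expected main obstacle: given an \emph{arbitrary} maximal collection $\mathcal{C}$ of pairwise non-crossing $k$-subsets, one must realise it geometrically. Here I would invoke the purity and connectivity of maximal weakly separated collections (the Leclerc--Zelevinsky purity conjecture, proved in \cite{ops}): $\mathcal{C}$ can be joined to a fixed reference cluster — say a rectangular one — by a finite sequence of mutations $K\cup\{a,c\}\mapsto K\cup\{b,d\}$, and each such mutation is mirrored on the diagram side by a local square move of the ASD (equivalently a twist, in the sense used for the associated dimer algebras in \cite{bkm}); pushing the reference ASD along this path produces a reduced ASD $D$ with $\mathcal{L}(D)=\mathcal{C}$, while the first two paragraphs guarantee that $D$ does not depend on the chosen path. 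Alternatively, one can route the whole argument through Postnikov's reduced plabic graphs with decorated permutation $i\mapsto i+k$, which are known to biject with reduced $(k,n)$-alternating strand diagrams and whose face labels are precisely the maximal weakly separated collections; the content of the theorem is then exactly the OPS description of those face labels.
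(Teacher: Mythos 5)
The paper does not prove this statement at all: Theorem \ref{scott} is quoted as a black box from \cite{sco}, so there is no internal argument to compare yours against. Judged on its own terms, your sketch is a faithful reconstruction of how the result is actually established in the literature, and it correctly locates where the weight falls: the forward direction (labelling alternating regions, weak separation of the face labels, and the region count) is Scott/Postnikov, while surjectivity genuinely requires the purity and mutation-connectedness theorems of \cite{ops} (or \cite{dkk}) and cannot be done by elementary means -- indeed, strictly speaking the bijection as stated is Scott plus Oh--Postnikov--Speyer, which your final sentence acknowledges. Your Euler-characteristic maximality count closes up because $(k-1)(n-k-1)+n=k(n-k)+1$, the standard face count for a reduced diagram of type $(k,n)$. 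Two small cautions. First, you conflate square moves with twists: in the terminology of this paper (and of \cite{bkm}), twists are the equivalences relating diagrams with the \emph{same} cluster, whereas the move mirroring a mutation $K\cup\{a,c\}\mapsto K\cup\{b,d\}$ is the local square move (urban renewal) at a quadrivalent region; these should not be identified. Second, your non-crossing step (``tracing the four strands forces a violation of reducedness'') and your injectivity step (``the poset of labels determines $D$ up to isotopy'') are the right claims but are asserted rather than argued; at the level of a sketch that defers to \cite{sco} and \cite{ops} this is acceptable, but neither is a one-line verification.
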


\subsection{Plabic Graphs}

Plabic graphs were introduced in \cite{post}, see also \cite{bkm}, \cite{ops}. Just as in Defintion \ref{label}, where a $k$-subset was associated to each alternating region, we may associate a colour to each oriented region: specifically, we draw a white node in each clockwise region and a black node in each anti-clockwise region, as in Figure \ref{plab}. In addition, a (non-coloured) boundary vertex is added to each of the initial marked points on the disc. The nodes of neighbouring oriented regions are then connected with an edge to form a \emph{plabic graph}. Note that, just as alternating strand diagrams may be reduced, there are reduced plabic graphs - a plabic graph is \emph{reduced} if each black node neighbours only white nodes and vice versa. A reduced alternating strand diagram gives rise to a reduced plabic graph. 

Fix a cluster $\mathcal{C}$ in $\mathcal{S}_{k,n}$. By Theorem \ref{scott}, $\mathcal{C}$ has an associated alternating strand diagram, and hence we can define the (reduced) \emph{plabic graph associated with $\mathcal{C}$}. 
\begin{figure}
\caption{The plabic graph associated to the alternating strand diagram $D$ from Example \ref{asd}, where we have omitted the boundary vertices.}\label{plab}
$$\begin{tikzpicture} 
[scale=2]
\node(A) at (0.951,0.309){};
\node(B) at (0,1){};
\node(C) at (-0.951,0.309){};
\node(D) at (-0.5878,-0.809){};
\node(E) at (0.5878,-0.809){};
\node(F) at (0.5878,0.809){$45$};
\node(G) at (-0.5878,0.809){$34$};
\node(H) at (-0.9511,-0.3090){$23$};
\node(I) at (0,-1){$12$};
\node(J) at (0.9511,-0.3090){$51$};
\node(K) at (-0.4,0.2){$24$};
\node(L) at (0.4,0.2){$14$};
\node(M) at (0,0.505){$\circ$};
\node(N) at (-0.43,-0.37){$\circ$};
\node(O) at (0.43,-0.37){$\circ$};
\node(P) at (-0.6463,0.233){$\bullet$};
\node(Q) at (0.6463,0.233){$\bullet$};
\node(R) at (0,-0.2){$\bullet$};

\path[<-,bend right=10] (L)edgenode[above]{} (F);
\path[<-] (K)edgenode[above]{} (L);
\path[<-, bend left=5] (L)edgenode[above]{} (I);
\path[<-] (J)edgenode[above]{} (L);
\path[<-] (K)edgenode[above]{} (H);
\path[<-,bend right=10] (G)edgenode[above]{} (K);
\path[<-,bend left=5] (I)edgenode[above]{} (K);
\path[<-,bend right=30]
(F) edgenode[above]{} (G);
\path[<-,bend left=30]
(H) edgenode[above]{} (G);
\path[<-,bend right=30]
(H) edgenode[above]{} (I);
\path[<-,bend right=30]
(I) edgenode[above]{} (J);
\path[<-,bend left=30]
(F) edgenode[above]{} (J);

\path[dotted] (R) edge (M);
\path[dotted] (R) edge (N);
\path[dotted] (R) edge (O);
\path[dotted] (P) edge (M);
\path[dotted] (Q) edge (M);
\path[dotted] (P) edge (N);
\path[dotted] (Q) edge (O);

\end{tikzpicture}$$
\end{figure}
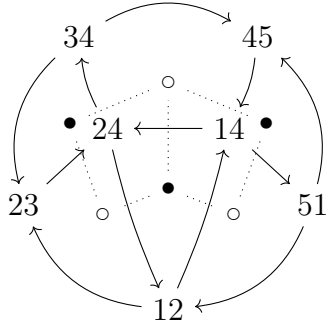

Let $\mathcal{C}$ be a cluster in $\mathcal{S}_{k,n}$, and $K$ be a $(k-1)$-subset of $\{1,2,\cdots, n\}$. If there are at least three subsets $I\in \mathcal{C}$ that satisfy $K\subset I$, then the collection of $I\in \mathcal{C}$ such that $K\subset I$ is a \emph{white clique in $\mathcal{C}$}, $\mathcal{W}(K)$. Each white clique is of the form $$(K\cup\{i_0\},K\cup\{i_1\},\cdots, K\cup\{i_l\}),$$ 
where $i_0<i_1<\cdots <i_l$ modulo $n$.

Likewise, let $L$ be a $(k+1)$-subset of $\{1,2,\cdots,n\}$. If there are at least three subsets $J\in\mathcal{C}$ such that $J\subset L$, then the collection of $J\in \mathcal{C}$ such that $J\subset L$ defines a \emph{black clique in $\mathcal{C}$}, $\mathcal{B}(L)$ is. Each black clique is of the form $$(L\setminus\{j_0\},L\setminus\{j_1\},\cdots, L\setminus\{j_m\}),$$ 
where $j_0<j_1<\cdots <j_m$ modulo $n$.  

The white (respectively black) cliques in $\mathcal{C}$ correspond to the white (respectively black) vertices in the plabic graph associated with $\mathcal{C}$. In Figure \ref{plab}, there are three white cliques: $$(41,42,43,45),$$ $$(12,14,15),$$ $$(21,23,24),$$ which are $\mathcal{W}(\{4\})$, $\mathcal{W}(\{1\})$ and $\mathcal{W}(\{2\})$ respectively. Likewise, there are three black cliques: $$(45,15,14),$$ $$(24,14,12),$$ $$(34,24,23),$$ which are the black cliques $\mathcal{B}(\{1,4,5\})$, $\mathcal{B}(\{1,2,4\})$ and $\mathcal{B}(\{2,3,4\})$. 

\subsection{Mutation}\label{muted}

 To summarise the previous sections, each cluster $\mathcal{C}\in\mathcal{S}_{k,n}$ can be described as a maximal collection of pairwise non-crossing $k$-subsets of $\{1,2,\cdots,n\}$. To each cluster we may associate a (reduced) $(k,n)$-alternating strand diagram, a dimer algebra and a (reduced) plabic graph. 

\begin{defn}\label{mute}
Let $\mathcal{C} \in \mathcal{S}_{k,n}$ and $I\cup \{a,c\}=:J\in\mathcal{C}$ for some $(k-2)$-subset $I$. Then if there exist $b,d\in\{1,2,\cdots,n\}$ such that $a<b<c<d$ modulo $n$ and the $k$-subsets $I\cup \{a,b\},I\cup \{c,d\},I\cup \{a,d\},I\cup \{b,c\}\in\mathcal{C}$, then $\mathcal{C}$ may be \emph{mutated at $I\cup \{a,c\}$} by replacing $I\cup \{a,c\}$ with the $k$-subset $I\cup \{b,d\}$.
\end{defn}

For a given cluster $\mathcal{C} \in \mathcal{S}_{k,n}$, a mutation at $J\in\mathcal{C}$ to a new cluster $\mathcal{C}^\prime\in\mathcal{S}_{k,n}$ is only possible when there are four subsets as in Definition \ref{mute}. Equivalently, a mutation at a non-interval subset $J$ is possible if $J$ is contained in precisely two white cliques and two black cliques in $\mathcal{C}$. Alternatively, a mutation at  a non-interval subset $J$ is possible if the vertex $j$ corresponding to $J$, in the quiver of the dimer algebra associated with $\mathcal{C}$ has precisely two arrows starting and two arrows ending at it. In this case, the effect on the quiver $Q$ of the dimer algebra associated with $\mathcal{C}$ can be described using Fomin-Zelevinsky quiver mutation \cite{fz1}:

\begin{enumerate}
\item For all possible paths $i\rightarrow j \rightarrow k$ in $Q$, add an arrow $i\rightarrow k$.
\item Reverse all arrows incident with $j$.
\item Remove all possible two-cycles.
\end{enumerate}

For more information on mutations of clusters in $\mathcal{S}_{k,n}$, we refer to \cite{bkm} for mutations of dimer algebras, \cite{ops} for mutations of plabic graphs and \cite{sco} for mutations of alternating strand diagrams.

\section{Rectangular frieze patterns}

\begin{defn}\label{rectify}
Following Scott \cite[Section 4]{sco}, a $k$-subset $I\subset\{1,2,\cdots,n\}$ is a \emph{double-interval subset} if it is of the form $I=I_1\bigsqcup I_2$ where $I_1$ and $I_2$ are interval subsets. 
A cluster $\mathcal{C} \in\mathcal{S}_{k,n}$ is \emph{rectangular} if every subset in $\mathcal{C}$ is a double-interval subset. 
\end{defn}

Every interval subset is automatically a double-interval subset. We will, again following the notation of Scott, sometimes partition the label of a double-interval subset with a comma to show the double interval. For example, we may write the label of the 4-subset $\{1,2,3,7\}$ as $123,7$.

\begin{figure}
\caption{An example of the quiver and plabic graph associated to a rectangular cluster in $\mathcal{S}_{k,n}$, where again we have omitted the boundary vertices of the plabic graph. }\label{wrecking}
$$\begin{tikzpicture}[scale=1]
\node(a) at (1,1){7891};
\node(e) at (11,1){5678};
\node(ai) at (4,1){6789};
\node(dd) at (3,3){2,789};
\node(df) at (5,3){2,678};
\node(dh) at (7,3){3,678};
\node(dj) at (9,3){4,678};
\node(em) at (11,4){4567};
\node(fd) at (3,5){12,78};
\node(ff) at (5,5){23,78};
\node(fh) at (7,5){23,67};
\node(fj) at (9,5){34,67};
\node(hd) at (3,7){912,7};
\node(hf) at (5,7){123,7};
\node(hh) at (7,7){234,7};
\node(hj) at (9,7){345,7};
\node(ia) at (1,9){8912};
\node(ig) at (6,9){1234};
\node(ik) at (11,9){3456};
\node(ie) at (4,9){9123};
\node(ii) at (8,9){2345};

\node(f) at (8/3,25/3){\large{$\circ$}};
\node(g) at (5,25/3){\large{$\circ$}};
\node(h) at (7,25/3){\large{$\circ$}};
\node(i) at (28/3,25/3){\large{$\circ$}};
\node(j) at (11/3,19/3){\large{$\circ$}};
\node(k) at (6,6){\large{$\circ$}};
\node(l) at (25/3,19/3){\large{$\circ$}};
\node(m) at (4,4){\large{$\circ$}};
\node(n) at (23/3,13/3){\large{$\circ$}};
\node(o) at (8/3,5/3){\large{$\circ$}};
\node(p) at (36/5,1.75){\large{$\circ$}};
\node(q) at (29/3,4){\large{$\circ$}};

\node(mf) at (4,23/3){\large{$\bullet$}};
\node(mg) at (6,23/3){\large{$\bullet$}};
\node(mh) at (8,23/3){\large{$\bullet$}};
\node(mi) at (11/5,5){\large{$\bullet$}};
\node(mj) at (13/3,17/3){\large{$\bullet$}};
\node(mk) at (23/3,17/3){\large{$\bullet$}};
\node(ml) at (10,25/4){\large{$\bullet$}};
\node(mm) at (6,4){\large{$\bullet$}};
\node(mn) at (25/3,11/3){\large{$\bullet$}};
\node(mo) at (4,7/3){\large{$\bullet$}};
\node(mp) at (31/3,8/3){\large{$\bullet$}};

\path[-,dotted] (f) edge (mf);
\path[-,dotted] (mf) edge (g);
\path[-,dotted] (g) edge (mg);
\path[-,dotted] (mg) edge (h);
\path[-,dotted] (h) edge (mh);
\path[-,dotted] (mh) edge (i);

\path[-,dotted] (mi) edge (j);
\path[-,dotted] (j) edge (mj);
\path[-,dotted] (mj) edge (k);
\path[-,dotted] (k) edge (mk);
\path[-,dotted] (mk) edge (l);
\path[-,dotted] (l) edge (ml);

\path[-,dotted] (mi) edge (m);
\path[-,dotted] (m) edge (mm);
\path[-,dotted] (mm) edge (n);
\path[-,dotted] (n) edge (mn);
\path[-,dotted] (mn) edge (q);

\path[-,dotted] (o) edge (mo);
\path[-,dotted] (mo) edge (p);
\path[-,dotted] (p) edge (mp);

\path[-,dotted] (f) edge (mi);
\path[-,dotted] (mi) edge (o);

\path[-,dotted] (mf) edge (j);
\path[-,dotted] (mj) edge (m);
\path[-,dotted] (mo) edge (m);

\path[-,dotted] (mg) edge (k);
\path[-,dotted] (k) edge (mm);
\path[-,dotted] (p) edge (mm);

\path[-,dotted] (mh) edge (l);
\path[-,dotted] (mk) edge (n);
\path[-,dotted] (mn) edge (p);

\path[-,dotted] (i) edge (ml);
\path[-,dotted] (ml) edge (q);
\path[-,dotted] (q) edge (mp);

\path[->] (df) edge (dd);
\path[->] (df) edge (dh);
\path[->] (dh) edge (dj);
\path[->] (fd) edge (ff);
\path[->] (fh) edge (ff);
\path[->] (fh) edge (fj);
\path[->] (hd) edge (hf);
\path[->] (hf) edge (hh);
\path[->] (hh) edge (hj);
\path[->] (dd) edge (fd);
\path[->] (fd) edge (hd);
\path[->] (ff) edge (df);
\path[->] (ff) edge (hf);
\path[->] (dh) edge (fh);
\path[->] (hh) edge (fh);
\path[->] (dj) edge (fj);
\path[->] (hj) edge (fj);
\path[->] (ie) edge (hd);
\path[->] (hf) edge (ie);
\path[->] (ii) edge (hh);
\path[->] (hj) edge (ii);
\path[->] (ik) edge (hj);
\path[->] (fj) edge (em);
\path[->] (em) edge (dj);
\path[->] (ia) edge (ie);
\path[->] (ie) edge (ig);
\path[->] (ig) edge (ii);
\path[->] (ii) edge (ik);
\path[->] (em) edge (ik);
\path[->] (e) edge (em);
\path[->] (e) edge (ai);
\path[->] (ai) edge (a);
\path[->] (ia) edge (a);

\path[->] (a) edge (dd);
\path[->] (dd) edge (ai);
\path[->] (ai) edge (df);
\path[->] (dj) edge (e);
\path[->] (hd) edge (ia);
\path[->] (hh) edge (ig);
\path[->] (ig) edge (hf);

\path[->] (fj) edge (dh);
\path[->] (hf) edge (fd);
\path[->] (fj) edge (hh);

\end{tikzpicture}$$

\end{figure}
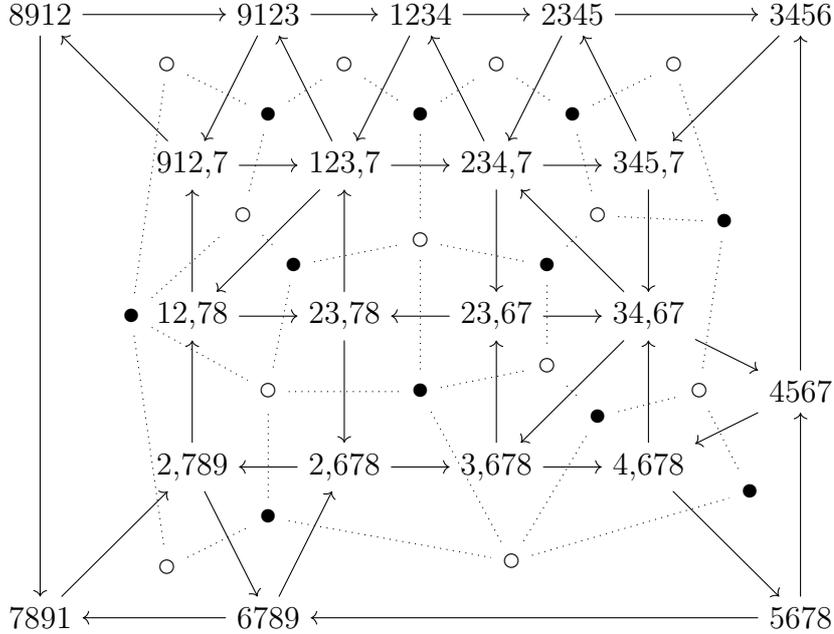

\subsection{Cliques for Rectangular Clusters}\label{cluck}

Let $\mathcal{C}$ be a rectangular cluster in the cluster structure of $\mathrm{Gr}(k,n)$. We say that a non-interval subset $I\in\mathcal{C}$ is an \emph{horizontal edge subset of $\mathcal{C}$} if $I=\{i,i+1,\cdots, i+k-2,j\}$, a \emph{vertical edge subset of $\mathcal{C}$} if $I=\{i,i+1,\cdots \widehat{j},\cdots, i+k\}$. We say that $I$ is an \emph{edge subset of $\mathcal{C}$} if it is either a horizontal or a vertical edge subset of $\mathcal{C}$ and $I$ is a \emph{corner subset of $\mathcal{C}$} if it is both. It should be noted that if we consider $\mathcal{C}$ as a cluster in $\mathcal{S}_{n-k,n}$ by replacing each $k$-subset in $\mathcal{C}$ with its complement, then vertical edge subsets become horizontal edge subsets and vice versa. 

For the rectangular cluster $\mathcal{C}$ depicted in Figure \ref{wrecking}, the 4-subsets with labels $1237$, $2347$, $2678$ and $3678$ are all horizontal edge subsets of $\mathcal{C}$. Likewise the 4-subsets with labels $1278$ and $3467$ are vertical edge subsets of $\mathcal{C}$ and the 4-subsets with labels $2789$, $9127$, $3457$ and $4678$ are all corner subsets of $\mathcal{C}$.
 
Recall that for a cluster $\mathcal{C}\in \mathcal{S}_{k,n}$, a collection of $k$-subsets in $\mathcal{C}$ are in a white clique $\mathcal{W}(K)$ if they share a $(k-1)$-subset $K$, and in a black clique $\mathcal{B}(L)$ if they are all contained in the same $(k+1)$-subset $L$.

Two subsets $I,J\in \mathcal{C}$ are said to be \emph{neighbours} if their corresponding vertices in the quiver of the dimer algebra associated with $\mathcal{C}$ are connected by an arrow. 
In the quiver of the dimer algebra associated with $\mathcal{C}$, every vertex in the interior must have an even number of neighbours - each arrow must be contained in both a clockwise and an anti-clockwise cycle. In addition, we are assuming that this quiver contains no two-cycles. 
So each non-interval subset in $\mathcal{C}$ must have at least four neighbours. This implies that each non-interval subset in $\mathcal{C}$ must belong to at least two white cliques and two black cliques. 
 
\begin{lm}\label{clique}
Let $\mathcal{C}$ be a rectangular cluster in $\mathcal{S}_{k,n}$. Then the following hold:
\begin{itemize}
\item For each clique of $\mathcal{C}$ that contains an interval subset as a member, each non-interval subset in the clique must be an edge subset, and all edge subsets in this clique must be of the same type (horizontal or vertical).
\item Each clique of $\mathcal{C}$ that does not contain an interval subset must have either three or four members. 
\item If two non-interval subsets $I,J\in\mathcal{C}$ are neighbours, then there is a black clique $\mathcal{B}(I\cup J)$ and a white clique $\mathcal{W}(I\cap J)$ in $\mathcal{C}$.  \label{cliky}
\end{itemize}
\end{lm}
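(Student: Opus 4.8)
The plan is to treat the three assertions in turn: the first two by a short finite case analysis on the admissible ``shapes'' of a clique label, and the third by translating into the dimer algebra / plabic graph picture. The one combinatorial fact driving the first two parts is that adjoining one element to, or deleting one element from, a subset of $\{1,\dots,n\}$ changes the number of maximal cyclic intervals (``pieces'') of the subset by at most one; combined with rectangularity, which forces every member of $\mathcal{C}$ to have at most two pieces, this is very restrictive. By passing to the complementary cluster in $\mathcal{S}_{n-k,n}$, under which interval subsets go to interval subsets, horizontal edge subsets go to vertical ones, and white and black cliques are interchanged, it suffices throughout to argue with white cliques $\mathcal{W}(K)$.

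For the first bullet, suppose $\mathcal{W}(K)$ contains an interval subset $\{c,c+1,\dots,c+k-1\}=K\cup\{i\}$. Then $K$ is this interval with $i$ deleted, so either $i$ is an endpoint and $K$ is again an interval of size $k-1$, or $i$ is interior and $K$ is a union of two intervals separated by the one-element gap $\{i\}$. In the first case an element $j$ with $K\cup\{j\}$ a double-interval subset either lies adjacent to $K$ (so $K\cup\{j\}$ is again an interval) or lies elsewhere (so $K\cup\{j\}=K\sqcup\{j\}$ is a horizontal edge subset); in the second case the admissible $j$ are $i$ itself, recovering the interval, and the at most two elements extending one of the two pieces outward, each giving a vertical edge subset. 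Either way every non-interval member of $\mathcal{W}(K)$ is an edge subset and all such members have the same type, as claimed; the black case follows by the complementation remark.

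For the second bullet I first note that every one of the $n$ interval $k$-subsets $\{i,i+1,\dots,i+k-1\}$ is weakly separated from every $k$-subset, hence lies in every maximal weakly separated collection, i.e.\ in every cluster of $\mathcal{S}_{k,n}$. Thus if $\mathcal{W}(K)$ contains no interval subset then $K$ is itself not an interval, for otherwise the two interval $k$-subsets extending $K$ would belong to $\mathcal{W}(K)$. So $K$ has exactly two or three pieces: it cannot have one by the previous sentence, and it cannot have four or more, since then $K\cup\{j\}$ would have at least three pieces for all $j$ and $\mathcal{W}(K)$ would be empty. If $K$ has three pieces, $K\cup\{j\}$ is a double-interval subset only when $j$ fills a one-element gap, of which there are at most three, so $|\mathcal{W}(K)|\le 3$. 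If $K$ has two pieces, then $j$ must extend a piece without merging the two (merging produces an interval, which is excluded), which needs the relevant gap to have size at least two and allows at most two choices of $j$ per gap, so $|\mathcal{W}(K)|\le 4$. Since a clique has at least three members, $\mathcal{W}(K)$ has three or four members; the black case is dual.

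For the third bullet, two vertices of $Q(D)$ joined by an arrow carry $k$-subsets differing in a single element — standard for the dimer algebras of reduced $(k,n)$-alternating strand diagrams (see \cite{bkm}) — so with $K:=I\cap J$ we have $I=K\cup\{a\}$, $J=K\cup\{b\}$ with $|K|=k-1$ and $I\cup J=K\cup\{a,b\}$ of size $k+1$. As $I,J$ are non-interval they are interior vertices, so the arrow between them is a non-boundary arrow and, by Definition \ref{label}, lies in a clockwise and in an anti-clockwise cycle of $Q(D)$; since $Q(D)$ has no two-cycles and no loops, each such cycle has length at least three. In the plabic-graph picture these two cycles run through the faces around the white node and around the black node at the two ends of the edge dual to the arrow, that is, through the members of $\mathcal{W}(K')$ and of $\mathcal{B}(L')$, where $K'$ is the $(k-1)$-subset of that white node and $L'$ the $(k+1)$-subset of that black node; since $K'\subseteq I\cap J$ and $I\cup J\subseteq L'$ with matching cardinalities, $K'=I\cap J$ and $L'=I\cup J$. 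Each cycle having length at least three, $\mathcal{W}(I\cap J)$ and $\mathcal{B}(I\cup J)$ each have at least three members and so are cliques. The first bullet is only bookkeeping — one must not miss the degenerate shapes of $K$ and $L$ — while the crux of the second is that the otherwise unbounded ``interval base'' case is killed precisely because the interval $k$-subsets are forced into $\mathcal{C}$; the third bullet is then a translation of the dimer relations into the language of cliques.
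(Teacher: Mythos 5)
Your proof is correct, and for the first bullet it follows the paper's own route: the same two-case analysis of the $(k-1)$-subset obtained by deleting one element from an interval $k$-subset, with the black case handled by complementation instead of the paper's ``similar''. For the other two bullets you do genuinely more than the paper. For the second, the paper assumes ``without loss of generality'' that the defining subset of an internal clique is a double-interval set and lists its four possible extensions; you first justify that the defining subset cannot be an interval (because every interval $k$-subset is weakly separated from everything and hence lies in every cluster, so an interval base would force interval members into the clique), and you also treat the case of a defining subset with three cyclic pieces, which the paper's normal form silently omits and which can actually occur (e.g.\ $\mathcal{W}(\{1,3,5\})$ in a rectangular cluster of $\mathcal{S}_{4,6}$); your bound of at most three members in that case is what makes ``three or four'' complete. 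For the third bullet the paper simply cites Section 9 of \cite{ops}, whereas you derive the claim from the definition of the dimer algebra: the arrow joining two interior vertices is non-boundary, hence lies on a clockwise and an anticlockwise cycle of length at least three, and these cycles realise exactly $\mathcal{W}(I\cap J)$ and $\mathcal{B}(I\cup J)$. The net effect is a self-contained argument that also fills a small gap in the published one; the only imported facts (adjacent dimer labels differ in one element; intervals belong to every maximal weakly separated collection) are standard and consistent with the paper's references.
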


  \begin{proof}
Let $J$ be a $(k-1)$-subset defining a white clique in $\mathcal{C}$ that contains an interval subset. Then either $J=\{i,i+1,\cdots, i+k-2\}$ or $J=\{i,i+1,\cdots, \widehat{j},\cdots, i+k-1\}$.  Any member of the white clique defined by $J$ must be of the form $J\cup\{a\}$ for some $1\leq a\leq n$, $a\notin J$. In the first case, $J\cup\{a\}=\{i,i+1,\cdots, i+k-2,a\}$ is either a horizontal edge subset or an interval subset. 

In the second case, for $J\cup\{a\}$ to be a double-interval subset, we must have either $a=j$ and $J\cup\{a\}$ is an interval subset, or $a\in\{i-1,i+k\}$ in which case $J\cup\{a\}$ is a vertical edge subset. The proof for black cliques is similar.

For the second part, by definition, any clique in a reduced plabic graph must have at least three members. We will first show that any white clique defined by a non-interval $(k-1)$-subset $J$ has at most four members. Without loss of generality, let $$J=\{i,i+1,\cdots,i+l,j,j+1,\cdots,j+(k-l-3)\}.$$
There are only four possible double-interval subsets that may belong to the white clique defined by $J$: $J\cup\{i-1\}$, $J\cup\{i+l+1\}$, $J\cup\{j-1\}$ and $J\cup\{j+(k-l-2)\}$.
On the other hand, let $J$ be a non-interval $(k+1)$-subset defining a black clique in $\mathcal{C}$. So without loss of generality $$J=\{i,i+1,\cdots,i+l,j,j+1,\cdots,j+(k-l-1)\}.$$ There are only four possible members of this clique: $J\setminus\{i\}$, $J\setminus\{i+l\}$, $J\setminus\{j\}$ and $J\setminus\{j+(k-l-1)\}$. Any clique not containing an interval subset must be of one of these two forms.  

The final part follows from Section 9 of \cite{ops}.
\end{proof} 

Let a clique that does not contain an interval subset be an \emph{internal clique}, and one that does contain an interval subset a \emph{boundary clique}. From Lemma \ref{clique}, we know that an internal clique defined by a subset $J$ must have at most four members; these four subsets form the (hypothetical) \emph{square associated with $J$}. The subsets in any such square have a natural cyclic ordering. So for a given cluster $\mathcal{C}\in\mathcal{S}_{k,n}$ and non-interval subset $I\in\mathcal{C}$, we say a $k$-subset $I^\prime$ (not necessarily in $\mathcal{C}$) is an \emph{opposite of $I$} if there is an internal clique of $\mathcal{C}$, defined by a subset $J$, where $I$ and $I^\prime$ are at opposite points in the square associated with $J$. Note that $I^\prime$ does not have to be contained in $\mathcal{C}$. 

\begin{eg}
For example, take the cluster $\mathcal{C}$ from Figure \ref{wrecking}. There is an internal white clique $\mathcal{W}(\{3,6,7\})\in \mathcal{C}$. The square associated with $\{3,6,7\}$ has elements with labels $$2367, 3467, 3567, 3678.$$ This implies that the subsets with labels $2367$ and $3567$ are opposites, just as $3467$ and $3678$ are. In particular, the subset with label $3567$ is not in $\mathcal{C}$, but it has an opposite which is. 
\end{eg}
The following proposition is a key result that we may use to define superimposed triangulations.
\begin{prop}\label{clq}
Let $\mathcal{C}$ be a rectangular cluster in $\mathcal{S}_{k,n}$, and let $I=\{i,\cdots j-1,j,l,l+1\cdots,m\}\in \mathcal{C}$, where $(j-i)+(m-l) =k-2.$ Then:
\begin{itemize}
\item If $i<j\leq i+k-2$ then precisely one of the $k$-subsets with label \begin{align*}
&i\cdots (j-1),(l-1) \cdots m\text{ or}\\
&(i+1)\cdots j ,l \cdots (m+1)
\end{align*} is also in $\mathcal{C}$.
\item If $i\leq j< i+k-2$ then precisely one of the $k$-subsets with label 
\begin{align*}&(i-1) \cdots j,l \cdots (m-1)\text{ or}\\
&i \cdots (j+1),(l+1)\cdots m
\end{align*} is also in $\mathcal{C}$.
\end{itemize} 

\end{prop}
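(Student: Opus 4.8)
The plan is to split ``precisely one'' into two assertions: that \emph{at most one} of the two candidate $k$-subsets lies in $\mathcal{C}$, and that \emph{at least one} does. I treat the first bullet only; the binding hypothesis there is $i<j$ (since $(j-i)+(m-l)=k-2$ and both blocks are nonempty, $j\le i+k-2$ is automatic), and the second bullet follows by passing to the complementary cluster in $\mathcal{S}_{n-k,n}$, which interchanges horizontal and vertical edge subsets (as noted after Definition~\ref{rectify}) and swaps the two bullets. I assume $I$ is a genuine non-interval double-interval subset, so its two blocks $\{i,\dots,j\}$, $\{l,\dots,m\}$ are separated by gaps of size $\ge1$ on both sides; in particular $l\ge j+2$ and $i\ge m+2$ cyclically, so $l-1\notin I$ and $m+1\notin I$. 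Write $P:=(I\setminus\{j\})\cup\{l-1\}$ and $P':=(I\setminus\{i\})\cup\{m+1\}$ for the two candidates; these are honest $k$-subsets, pairwise distinct and distinct from $I$.

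\textbf{At most one.} A direct computation gives $P\setminus P'=\{i,l-1\}$ and $P'\setminus P=\{j,m+1\}$, and the four indices $i,j,l-1,m+1$ are distinct and occur in this cyclic order (using $i<j$, $l-1>j$, and $m+1<i$ cyclically). Hence $i,j,l-1,m+1$ witness that $P$ and $P'$ are crossing, and since $\mathcal{C}$ is a collection of pairwise non-crossing subsets it contains at most one of them.

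\textbf{At least one.} Here I would use maximality of $\mathcal{C}$: by the previous paragraph it suffices to show that at least one of $P,P'$ is weakly separated from every member of $\mathcal{C}$, since then that one must lie in $\mathcal{C}$. The heart of the matter is the claim: if $X\in\mathcal{C}$ crosses $P$ and $Y\in\mathcal{C}$ crosses $P'$, then $X$ and $Y$ cross each other---which is impossible in $\mathcal{C}$; so one cannot simultaneously have ``$P$ crosses something in $\mathcal{C}$'' and ``$P'$ crosses something in $\mathcal{C}$.'' To prove the claim one analyses the shape of a double-interval subset $X$ that crosses $P$ but not $I$ (the latter because $X,I\in\mathcal{C}$): since $P$ is obtained from $I$ by deleting the block-endpoint $j$ and adjoining $l-1$, a crossing of $X$ with $P$ must ``localize'' at $l-1$ on one side and near $j$ on the other, and unwinding this forces $X\supseteq\{l,\dots,m\}$ with $l-1\notin X$ (so $l$ begins a block of $X$) and with that block of $X$ reaching strictly past $m$, so $m+1\in X$; symmetrically any $Y$ crossing $P'$ but not $I$ satisfies $Y\supseteq\{l,\dots,m\}$, $m+1\notin Y$, with the block of $Y$ containing $\{l,\dots,m\}$ reaching strictly past $l$ on the left, so $l-1\in Y$. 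But then $l-1\in Y\setminus X$ and $m+1\in X\setminus Y$, and a short check of the remaining block-endpoints of $X$ and $Y$ (which sit in the two gap regions of $I$, on appropriate sides) produces two more indices interleaving these, so $X$ crosses $Y$. Alternatively, one can organize ``at least one'' around Lemma~\ref{clique}: $P$ and $I$ are both members of the square $\mathcal{B}(I\cup\{l-1\})$ and $P'$ and $I$ of $\mathcal{B}(I\cup\{m+1\})$; counting the neighbours of the quiver vertex $I$---non-interval, hence at least four of them, lying in at least two black cliques among $\mathcal{B}(I\cup\{i-1\})$, $\mathcal{B}(I\cup\{j+1\})$, $\mathcal{B}(I\cup\{l-1\})$, $\mathcal{B}(I\cup\{m+1\})$---and invoking the ``three or four members'' clause of Lemma~\ref{clique} again singles out $P$ or $P'$.

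\textbf{Main obstacle.} The real work is the shape analysis underlying the claim: ruling out the configurations of $X$ in which the crossing with $P$ would already be a crossing with $I$; pinning down that $X$ must swallow all of $I$'s second block and extend it on exactly one side; and then matching up the leftover block-endpoints of $X$ and $Y$ to exhibit the crossing $X$--$Y$. One also has to deal with degenerate positions of $I$---a block of length one, or a block flush against a gap of length one, so that $I$ is an edge or corner subset and some of the relevant sets become frozen interval subsets (automatically in $\mathcal{C}$), or some $X$ crossing $P$ is itself an interval subset (forcing $P$ out of every cluster, so that $P'$ must be the member). Keeping the cyclic-order bookkeeping straight across these sub-cases is the bulk of the argument; phrasing it via the alternating white/black arrangement of cliques around the vertex $I$, with one incoming and one outgoing arrow along each block of $I$, is a convenient way to tame it.
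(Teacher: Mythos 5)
Your ``at most one'' half is correct and coincides with the paper's first step (the paper likewise just notes that the two candidates $J_1=P$ and $J_2=P'$ cross each other), and reducing the second bullet to the first by exchanging the roles of the two blocks is harmless. The gap is in ``at least one''. The claim your main route rests on --- if $X\in\mathcal{C}$ crosses $P$ and $Y\in\mathcal{C}$ crosses $P'$ then $X$ crosses $Y$ --- cannot be established from the data your sketch actually uses, namely that $X$ and $Y$ are double-interval subsets weakly separated from $I$ and from each other, because at that level of generality it is false. Take $k=3$, $n=9$, $I=\{1,2,7\}$, so $P=\{1,6,7\}$ and $P'=\{2,7,8\}$. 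Then $X=\{5,7,8\}$ and $Y=\{1,5,6\}$ are double-interval subsets, each weakly separated from $I$ and from one another ($X\setminus Y=\{7,8\}$ and $Y\setminus X=\{1,6\}$ do not interleave), yet $X$ crosses $P$ (witness $1<5<6<8$) and $Y$ crosses $P'$ (witness $1<2<5<7$). Your supporting shape analysis also fails on its own terms: $X=\{2,8,9\}$ is a double-interval subset crossing $P=\{1,6,7\}$ but not $I$, and it does not contain the second block $\{l,\dots,m\}=\{7\}$ of $I$, contradicting the asserted form $X\supseteq\{l,\dots,m\}$ with $m+1\in X$. Any correct proof of ``at least one'' must therefore invoke the global structure of $\mathcal{C}$ --- maximality together with rectangularity of \emph{all} of its members --- and not merely the pairwise relations of two members with $I$.

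That global input is exactly what the paper's proof supplies. It works through the clique structure of Lemma \ref{clique}: assuming neither $J_1$ nor $J_2$ lies in $\mathcal{C}$, it shows that neither $\mathcal{W}(I\cap J_1)$ nor $\mathcal{W}(I\cap J_2)$ can be a white clique of $\mathcal{C}$ (a three-membered such clique would force a black clique $\mathcal{B}(I\cup\{m+1\})$ whose third member is unavailable), hence both $\mathcal{W}(I\cap K_1)$ and $\mathcal{W}(I\cap K_2)$ are cliques, and then tracks which black cliques through $I$ can coexist with these to reach a contradiction. Your one-sentence fallback (``counting the neighbours of the quiver vertex $I$ \dots\ singles out $P$ or $P'$'') points at this argument but does not make it: knowing that $I$ lies in at least two black cliques among the four candidates does not tell you that $\mathcal{B}(I\cup\{l-1\})$ or $\mathcal{B}(I\cup\{m+1\})$ is among them, and even when $\mathcal{B}(I\cup\{l-1\})$ is a clique of $\mathcal{C}$ its three guaranteed members could be $I$, $(I\cup\{l-1\})\setminus\{i\}$ and $(I\cup\{l-1\})\setminus\{m\}$ without ever including $P=(I\cup\{l-1\})\setminus\{j\}$. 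Ruling out these escapes is the actual content of the paper's proof and is absent from yours.
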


  \begin{proof}
Let $J_1$ be the subset with label $i\cdots (j-1),(l-1) \cdots m$ and $J_2$ the subset with label $(i+1)\cdots j ,l \cdots (m+1)$. Likewise let $K_1$ be the subset with label $i(i-1) \cdots j,l \cdots (m-1)$ and $K_2$ the subset with label $i \cdots (j+1),(l+1)\cdots m$
The pair of subsets $J_1$ and $J_2$ are crossing, just as the pair of subsets $K_1$ and $K_2$ are, so at most one of each pair may be in the cluster $\mathcal{C}$.

Observe that there are at least two and at most four (internal) white cliques in $\mathcal{C}$ containing $I$ - the four potential white cliques are $\mathcal{W}(I\cap J_1)$, $\mathcal{W}(I\cap J_2)$, $\mathcal{W}(I\cap K_1)$ and $\mathcal{W}(I\cap K_2)$. 
Assume there is a white clique $\mathcal{W}(I\cap J_1)$ in $\mathcal{C}$, and $J_1\notin \mathcal{C}$. Then this white clique in $\mathcal{C}$ has three members, $$\mathcal{W}(I\cap J_1)=(I, I\cap J_1\cup\{m+1\}, I\cap J_1\cup\{i-1\}).$$  
By Lemma \ref{cliky}, there is a black clique in $\mathcal{C}$ given by $\mathcal{B}(I\cup\{m+1\})$, which must have at least three members. Two members are given by $I$ and $I\cap J_1\cup\{m+1\}$; a third member could be either $J_2$ (which by assumption is not in $\mathcal{C}$) or the subset with label $i\cdots j,(l+1)\cdots (m+1)$, which is crossing with $I \cap J_1\cup\{i-1\}$, as $i-1<j<l<m+1$ modulo $n$. 

So we cannot have either white cliques $\mathcal{W}(I\cap J_1)$ or $\mathcal{W}(I\cap J_2)$ in $\mathcal{C}$. This means both $\mathcal{W}(I\cap K_1)$ and $\mathcal{W}(I\cap K_2)$ must be white cliques in $\mathcal{C}$. This implies, by the same argument as above, that either $K_1$ or $K_2$ is in $\mathcal{C}$. Let $K_1\in\mathcal{C}$, and $K_2\notin \mathcal{C}$. Then there is a black clique $\mathcal{B}(I\cup K_1)$ in $\mathcal{C}$, and there may only be one other black clique in $\mathcal{C}$ containing $I$. As $K_2\notin \mathcal{C}$, we have $I\cap K_2\cup\{i-1\}\in\mathcal{C}$ but  $I\cap K_2\cup\{m+1\}\notin \mathcal{B}(I\cup K_1)$. So the other black clique in $\mathcal{C}$ is of the form $\mathcal{B}(I\cup \{m+1\})$. But there can be no member of the white clique $\mathcal{W}(I\cap K_1)$ that also belongs to $\mathcal{B}(I\cup \{m+1\})$, a contradiction. This implies that either $J_1$ or $J_2$ is in $\mathcal{C}$. Similarly, either $K_1$ or $K_2$ is in $\mathcal{C}$.
\end{proof} 

Proposition \ref{clq} implies a structure for the subsets within each rectangular cluster in $\mathcal{S}_{k,n}$. Observe that for each rectangular cluster $\mathcal{C}\in\mathcal{S}_{k,n}$, the set of complements of members of $\mathcal{C}$ are a rectangular cluster in $\mathcal{S}_{n-k,n}$. 

\begin{defi}
Let $\mathcal{C}$ be a rectangular cluster in $\mathcal{S}_{k,n}$. Then the \emph{lattice interior} of $\mathcal{C}$ is constructed as follows:
For each $1\leq l \leq k-1$, the \emph{$l^\mathrm{th}$ row} is the collection of subsets in $\mathcal{C}$ of the form $\{i,i+1, \cdots, i+l-1, j,\cdots, j+k-l-2\}$.
For each $1\leq m\leq n-k-1$, the \emph{$m^\mathrm{th}$ column} is the collection of subsets in $\mathcal{C}$ whose complements form the $m^\mathrm{th}$ row.
\end{defi}

\subsection{Superimposed Triangulations}
Let $\mathcal{C}$ be a rectangular cluster, and consider the subset $I:=\{i\cdots j,l\cdots m\}\in\mathcal{C}$. Define the \emph{lower arc of $I$}, denoted by $\widecheck{I}$ to be the arc $(i,l)$. Likewise, defined the \emph{upper arc of $I$}, denoted by $\widehat{I}$ to be the arc $(j,m)$. In this section we study the properties of collections of upper and lower arcs.

Let $b$ and $r$ be positive integers. Then a \emph{$(b,r)$-tiling of the $n$-gon} is a tiling of the $n$-gon where one tile is a $(b+2)$-gon, one is an $(r+2)$-gon and the remaining tiles are triangles. 
Fix an integer $n$. Let $\mathcal{I}$ be a $(b,r)$-tiling and let $\mathcal{J}$ be a $(b+1,r-1)$-tiling. Then $\mathcal{I}$ and $\mathcal{J}$ are said to be \emph{non-crossing} if whenever an arc $\alpha$ in the tiling $\mathcal{I}$ intersects an arc $\beta$ in the tiling $\mathcal{J}$, then an endpoint of $\alpha$ and an endpoint of $\beta$ are adjacent to each other on the boundary of the $n$-gon. 

\begin{eg}\label{ball}
It should be kept in mind that $(b,r)$-tilings arise from rectangular clusters. For example, take the cluster from Figure \ref{wrecking}, there are three rows in the lattice interior:
\begin{align*}
&912,7;\ 123,7;\ 234,7;\ 345,7\\
&12,78;\ 23,78;\ 23,67;\ 34,67\\
&2,789;\ 2,678;\ 3,678;\ 4,678 
\end{align*}
By taking the collection of lower arcs, we obtain for each row the arcs:
\begin{align*}
&(9,7);\ (1,7);\ (2,7);\ (3,7)\\
&(1,7);\ (2,7);\ (2,6);\ (3,6)\\
&(2,7);\ (2,6);\ (3,6);\ (4,6) 
\end{align*}
These collections determine the following tilings, respectively a $(1,3)$-tiling, a $(2,2)$-tiling and a $(3,1)$-tiling.
$$\begin{tikzpicture}[scale=0.5]
\draw [line width=1.6pt] (-4,2)-- (-2,2);
\draw [line width=1.6pt] (-2,2)-- (-0.4679111137620442,3.2855752193730785);
\draw [line width=1.6pt] (-0.4679111137620442,3.2855752193730785)-- (-0.12061475842818314,5.255190725397494);
\draw [line width=1.6pt] (-0.12061475842818314,5.255190725397494)-- (-1.120614758428183,6.9872415329663715);
\draw [line width=1.6pt] (-1.120614758428183,6.9872415329663715)-- (-3,7.671281819617709);
\draw [line width=1.6pt] (-3,7.671281819617709)-- (-4.879385241571816,6.987241532966372);
\draw [line width=1.6pt] (-4.879385241571816,6.987241532966372)-- (-5.879385241571816,5.255190725397496);
\draw [line width=1.6pt] (-5.879385241571816,5.255190725397496)-- (-5.532088886237956,3.28557521937308);
\draw [line width=1.6pt] (-5.532088886237956,3.28557521937308)-- (-4,2);
\draw [line width=2pt] (-5.532088886237956,3.28557521937308)-- (-4.879385241571816,6.987241532966372);
\draw [line width=2pt] (-4.879385241571816,6.987241532966372)-- (-4,2);
\draw [line width=2pt] (-4.879385241571816,6.987241532966372)-- (-2,2);
\draw [line width=2pt] (-4.879385241571816,6.987241532966372)-- (-0.4679111137620442,3.2855752193730785);
\begin{scriptsize}
\draw [fill=black] (-4,2) circle (2.5pt);
\draw[color=black] (-4.16891041165322,1.7619787629833805) node {$1$};
\draw [fill=black] (-2,2) circle (2.5pt);
\draw[color=black] (-1.8425412140050854,1.7412999256709527) node {$2$};
\draw [fill=black] (-0.4679111137620442,3.2855752193730785) circle (2.5pt);
\draw[color=black] (-0.30196783422920914,3.2094973748533313) node {$3$};
\draw [fill=black] (-0.12061475842818314,5.255190725397494) circle (2.5pt);
\draw[color=black] (0.08059065605070645,5.318738780720974) node {$4$};
\draw [fill=black] (-1.120614758428183,6.9872415329663715) circle (2.5pt);
\draw[color=black] (-0.963690628226901,7.20051297615191) node {$5$};
\draw [fill=black] (-3,7.671281819617709) circle (2.5pt);
\draw[color=black] (-2.9902166848448326,7.955290538055527) node {$6$};
\draw [fill=black] (-4.879385241571816,6.987241532966372) circle (2.5pt);
\draw[color=black] (-5.0374215787751915,7.262549488089193) node {$7$};
\draw [fill=black] (-5.879385241571816,5.255190725397496) circle (2.5pt);
\draw[color=black] (-6.143739374990083,5.329078199377188) node {$8$};
\draw [fill=black] (-5.532088886237956,3.28557521937308) circle (2.5pt);
\draw[color=black] (-5.699144372772883,3.2094973748533313) node {$9$};
\end{scriptsize}
\end{tikzpicture}$$

$$\begin{tikzpicture}[scale=0.5]
\draw [line width=1.6pt] (-4,2)-- (-2,2);
\draw [line width=1.6pt] (-2,2)-- (-0.4679111137620442,3.2855752193730785);
\draw [line width=1.6pt] (-0.4679111137620442,3.2855752193730785)-- (-0.12061475842818314,5.255190725397494);
\draw [line width=1.6pt] (-0.12061475842818314,5.255190725397494)-- (-1.120614758428183,6.9872415329663715);
\draw [line width=1.6pt] (-1.120614758428183,6.9872415329663715)-- (-3,7.671281819617709);
\draw [line width=1.6pt] (-3,7.671281819617709)-- (-4.879385241571816,6.987241532966372);
\draw [line width=1.6pt] (-4.879385241571816,6.987241532966372)-- (-5.879385241571816,5.255190725397496);
\draw [line width=1.6pt] (-5.879385241571816,5.255190725397496)-- (-5.532088886237956,3.28557521937308);
\draw [line width=1.6pt] (-5.532088886237956,3.28557521937308)-- (-4,2);
\draw [line width=2pt] (-4.879385241571816,6.987241532966372)-- (-4,2);
\draw [line width=2pt] (-4.879385241571816,6.987241532966372)-- (-2,2);
\draw [line width=2pt] (-3,7.671281819617709)-- (-2,2);
\draw [line width=2pt] (-3,7.671281819617709)-- (-0.4679111137620442,3.2855752193730785);
\begin{scriptsize}
\draw [fill=black] (-4,2) circle (2.5pt);
\draw[color=black] (-4.16891041165322,1.7619787629833805) node {$1$};
\draw [fill=black] (-2,2) circle (2.5pt);
\draw[color=black] (-1.8425412140050854,1.7412999256709527) node {$2$};
\draw [fill=black] (-0.4679111137620442,3.2855752193730785) circle (2.5pt);
\draw[color=black] (-0.30196783422920914,3.2094973748533313) node {$3$};
\draw [fill=black] (-0.12061475842818314,5.255190725397494) circle (2.5pt);
\draw[color=black] (0.08059065605070645,5.318738780720974) node {$4$};
\draw [fill=black] (-1.120614758428183,6.9872415329663715) circle (2.5pt);
\draw[color=black] (-0.963690628226901,7.20051297615191) node {$5$};
\draw [fill=black] (-3,7.671281819617709) circle (2.5pt);
\draw[color=black] (-2.9902166848448326,7.955290538055527) node {$6$};
\draw [fill=black] (-4.879385241571816,6.987241532966372) circle (2.5pt);
\draw[color=black] (-5.0374215787751915,7.262549488089193) node {$7$};
\draw [fill=black] (-5.879385241571816,5.255190725397496) circle (2.5pt);
\draw[color=black] (-6.143739374990083,5.329078199377188) node {$8$};
\draw [fill=black] (-5.532088886237956,3.28557521937308) circle (2.5pt);
\draw[color=black] (-5.699144372772883,3.2094973748533313) node {$9$};
\end{scriptsize}
\end{tikzpicture}$$

$$\begin{tikzpicture}[scale=0.5]
\draw [line width=1.6pt] (-4,2)-- (-2,2);
\draw [line width=1.6pt] (-2,2)-- (-0.4679111137620442,3.2855752193730785);
\draw [line width=1.6pt] (-0.4679111137620442,3.2855752193730785)-- (-0.12061475842818314,5.255190725397494);
\draw [line width=1.6pt] (-0.12061475842818314,5.255190725397494)-- (-1.120614758428183,6.9872415329663715);
\draw [line width=1.6pt] (-1.120614758428183,6.9872415329663715)-- (-3,7.671281819617709);
\draw [line width=1.6pt] (-3,7.671281819617709)-- (-4.879385241571816,6.987241532966372);
\draw [line width=1.6pt] (-4.879385241571816,6.987241532966372)-- (-5.879385241571816,5.255190725397496);
\draw [line width=1.6pt] (-5.879385241571816,5.255190725397496)-- (-5.532088886237956,3.28557521937308);
\draw [line width=1.6pt] (-5.532088886237956,3.28557521937308)-- (-4,2);
\draw [line width=2pt] (-4.879385241571816,6.987241532966372)-- (-2,2);
\draw [line width=2pt] (-3,7.671281819617709)-- (-2,2);
\draw [line width=2pt] (-3,7.671281819617709)-- (-0.4679111137620442,3.2855752193730785);
\draw [line width=2pt] (-3,7.671281819617709)-- (-0.12061475842818314,5.255190725397494);
\begin{scriptsize}
\draw [fill=black] (-4,2) circle (2.5pt);
\draw[color=black] (-4.16891041165322,1.7619787629833805) node {$1$};
\draw [fill=black] (-2,2) circle (2.5pt);
\draw[color=black] (-1.8425412140050854,1.7412999256709527) node {$2$};
\draw [fill=black] (-0.4679111137620442,3.2855752193730785) circle (2.5pt);
\draw[color=black] (-0.30196783422920914,3.2094973748533313) node {$3$};
\draw [fill=black] (-0.12061475842818314,5.255190725397494) circle (2.5pt);
\draw[color=black] (0.08059065605070645,5.318738780720974) node {$4$};
\draw [fill=black] (-1.120614758428183,6.9872415329663715) circle (2.5pt);
\draw[color=black] (-0.963690628226901,7.20051297615191) node {$5$};
\draw [fill=black] (-3,7.671281819617709) circle (2.5pt);
\draw[color=black] (-2.9902166848448326,7.955290538055527) node {$6$};
\draw [fill=black] (-4.879385241571816,6.987241532966372) circle (2.5pt);
\draw[color=black] (-5.0374215787751915,7.262549488089193) node {$7$};
\draw [fill=black] (-5.879385241571816,5.255190725397496) circle (2.5pt);
\draw[color=black] (-6.143739374990083,5.329078199377188) node {$8$};
\draw [fill=black] (-5.532088886237956,3.28557521937308) circle (2.5pt);
\draw[color=black] (-5.699144372772883,3.2094973748533313) node {$9$};
\end{scriptsize}
\end{tikzpicture}$$

\end{eg}

It can be observed that the arcs $(2,6)$ and $(3,7)$ of Example \ref{ball} are crossing, although they arise as lower arcs from non-crossing $4$-subsets. This motivates the following defintion.

\begin{defn}\label{super}
Let $\mathcal{I}_1$ be a $(1,k-1)$-tiling of the $n$-gon, $\mathcal{I}_2$ a $(2,k-2)$-tiling of the $n$-gon and so on. 

The collection $\{\mathcal{I}_1,\mathcal{I}_2,\cdots, \mathcal{I}_{k-1}\}$ is a \emph{$(k-1)$-superimposed triangulation of the $n$-gon} if for each $1\leq j\leq k-1$, $\mathcal{I}_j$ and $\mathcal{I}_{j+1}$ are non-crossing .
\end{defn}

Then we have the main result of this section.

\begin{prop}\label{supertri}
Let $\mathcal{C}\in\mathcal{S}_{k,n}$ be a rectangular cluster. Then the collection of lower arcs $$\{\widecheck{I}|I\in\mathcal{C}\}$$ determines a $(k-1)$-superimposed triangulation. In addition, the collection of upper arcs $$\{\widehat{I}|I\in\mathcal{C}\}$$ also determines a $(k-1)$-superimposed triangulation.
\end{prop}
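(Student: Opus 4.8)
The plan is to prove the claim for lower arcs; the statement for upper arcs then follows by applying the result to the rectangular cluster of complements in $\mathcal{S}_{n-k,n}$, since complementation interchanges the roles of rows and columns and, by the discussion preceding Definition \ref{rectify}, sends lower arcs of $\mathcal{C}$ to upper arcs of the complementary cluster (up to the cyclic relabelling of the boundary). So I would first record that reduction as a short paragraph, then concentrate on the lower-arc case.

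For the lower-arc case, the first step is to show that for each fixed $j$ with $1\leq j\leq k-1$, the collection $\mathcal{I}_j:=\{\widecheck I \mid I \text{ in the }j^{\mathrm{th}}\text{ row of }\mathcal{C}\}$ is a $(j,k-j)$-tiling of the $n$-gon. The $j^{\mathrm{th}}$ row consists of the subsets $I=\{i,\dots,i+j-1,l,\dots,l+k-j-2\}\in\mathcal{C}$, and $\widecheck I=(i,l)$ is the arc joining the ``left endpoints'' of the two intervals. The key inputs are: (i) pairwise non-crossingness of $\mathcal{C}$, which I would check descends to non-crossingness of the lower arcs \emph{within a single row} (two lower arcs $(i,l)$ and $(i',l')$ coming from the same row cannot cross because a crossing would produce the forbidden weakly-separated configuration $s<t<u<v$ among the $I$'s); (ii) a count of how many subsets lie in the $j^{\mathrm{th}}$ row, using $|\mathcal{C}| = (k-1)(n-k-1)+n$ together with the boundary structure from Lemma \ref{clique}, to see that the number of arcs is exactly $n-3$ so that they form a full tiling; and (iii) an identification of the two non-triangular tiles: the ``horizontal edge subsets'' of the $j^{\mathrm{th}}$ row give the arcs bounding a $(j+1)$-gon, and the ``vertical edge subsets'' (equivalently, the horizontal edge subsets of the complement) give the arcs bounding a $(k-j+1)$-gon. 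This is where Proposition \ref{clq} does the real work: it guarantees, row by row, that the lower arcs propagate consistently so that each elementary triangle is actually subdivided, i.e.\ that $\mathcal{I}_j$ is a genuine tiling and not merely a partial one.

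The second and main step is to show that consecutive collections $\mathcal{I}_j$ and $\mathcal{I}_{j+1}$ are non-crossing in the sense of Definition \ref{super}: whenever $\alpha\in\mathcal{I}_j$ crosses $\beta\in\mathcal{I}_{j+1}$, an endpoint of $\alpha$ is adjacent on the $n$-gon to an endpoint of $\beta$. Here $\alpha=(i,l)$ comes from $I=\{i,\dots,i+j-1,l,\dots,l+k-j-2\}\in\mathcal{C}$ in row $j$ and $\beta=(i',l')$ comes from $I'=\{i',\dots,i'+j,l',\dots,l'+k-j-3\}\in\mathcal{C}$ in row $j+1$. Since $I$ and $I'$ are non-crossing as $k$-subsets but the arcs $(i,l)$ and $(i',l')$ \emph{do} cross, the only way to reconcile this is that the interval structure forces $i'\in\{i-1,i\}$ or $l'\in\{l-1,l\}$ (or a symmetric condition), i.e.\ the left endpoint of one arc is forced to sit at, or one step before, the left endpoint of the other. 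I would extract this by a careful but finite case analysis of the cyclic orderings of $i,i+1,\dots,i+j-1,l,\dots$ against $i',\dots,i'+j,l',\dots$, using weak separation of $I$ and $I'$ to eliminate all configurations except those in which the required adjacency holds; Proposition \ref{clq} applied to $I$ (passing between row $j$ and row $j+1$) is exactly what pins down which of the two candidate neighbours of $I$ in the adjacent row actually lies in $\mathcal{C}$, and hence controls $I'$.

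The main obstacle I anticipate is precisely this last case analysis: translating ``$I,I'$ weakly separated'' plus ``$\widecheck I,\widecheck I'$ cross'' into the adjacency condition, cleanly and without an explosion of cases, especially handling the cyclic (mod $n$) wrap-around and the degenerate situations where an interval of $I$ has length $1$ or where $I$ or $I'$ is itself an interval subset (so its lower arc is a boundary edge and vacuously crosses nothing). I would try to organise this by first reducing to the ``linear'' (non-wrapping) case by a rotation, then arguing that a crossing of $(i,l)$ with $(i',l')$ means one of $i',l'$ lies strictly between $i$ and $l$ and the other strictly outside $[i,l]$; combined with $|I\setminus I'|=|I'\setminus I|$ being small and the block structure of both subsets, this should leave only the adjacency-satisfying configurations. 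I would also remark that the same argument, run on the complementary cluster, simultaneously yields the vertical/column version and thus completes the upper-arc statement, so no separate work is needed there beyond the opening reduction.
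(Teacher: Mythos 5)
Your treatment of the lower arcs follows the same route as the paper: Proposition \ref{clq} supplies the row-by-row $(j,k-j)$-tilings, and the weak separation of two subsets in consecutive rows is what forces an endpoint of one lower arc to be adjacent to an endpoint of the other whenever the arcs cross. You are in fact more explicit than the paper, which compresses the entire adjacency argument into a single asserted sentence; the case analysis you flag as the main obstacle is real and is left implicit in the published proof. One small correction to your counting step: a $(b,r)$-tiling of the $n$-gon with $b+r=k$ has $n-k-1$ internal diagonals, not $n-3$, which matches the $(k-1)(n-k-1)$ non-interval subsets of $\mathcal{C}$ distributed over $k-1$ rows.

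The genuine problem is your opening reduction of the upper-arc statement to the lower-arc statement via complementation. As you yourself note, complementation interchanges rows and columns; consequently the lower arcs of $\mathcal{C}^c\in\mathcal{S}_{n-k,n}$, grouped by the rows of $\mathcal{C}^c$, form an $\bigl((n-k)-1\bigr)$-superimposed triangulation, and transporting this back gives that the upper arcs of $\mathcal{C}$ \emph{grouped by columns} form an $(n-k-1)$-superimposed triangulation. That is a different (and differently parametrised) structure from the claimed $(k-1)$-superimposed triangulation, which groups the upper arcs by rows; for $k=3$ and $n>6$ the two do not even have the same number of constituent tilings, and Theorem \ref{quid} genuinely needs the $2$-superimposed version. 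The repair is easy: either use the order-reversing reflection of the disc, which preserves the row structure and interchanges left and right endpoints of each interval (hence lower and upper arcs), or simply rerun your lower-arc argument verbatim with right endpoints in place of left endpoints --- Proposition \ref{clq} and weak separation are symmetric under this exchange, and this is what the paper's closing ``similarly'' amounts to. With that substitution your proposal is sound and coincides with the paper's argument.
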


  \begin{proof}  It follows from the description in Proposition \ref{clq} that for each $1\leq l\leq k-1$, the collection of lower arcs for the $l^\mathrm{th}$ each row defines a $(l,k-l)$-tiling of the $n$-gon.
If $s<t<u<v$ modulo $n$ and $[s,u]$ is a lower arc in the $l$-th row and $(t,v)$ is a lower arc in the $(l+1)$-th row (or vice-versa), then for the two associated subsets to be non-crossing, we must have that either $s$ and $t$ are adjacent on the boundary of the $n$-gon, or $u$ and $v$ are adjacent on the boundary of the $n$-gon. This implies that the $(l,k-l)$-tiling and $(l+1,k-l-1)$-tiling arising from $\mathcal{C}$ are non-crossing. So the collection of lower arcs determines a $(k-1)$-superimposed triangulation.
Similarly, the collection of upper arcs defines $(k-1)$-superimposed triangulation. \end{proof}

\section{$\mathrm{SL}_3$-frieze patterns}

In this section, we will analyse $\mathrm{SL}_3$-frieze patterns more closely. Much of this theory is expected to generalise to higher dimensions, but some concepts do not generalise so easily. Especially, quiddity sequences are harder to understand in higher dimensions.

It has been observed in \cite[Section 3.2]{mgost}, see also \cite[Section 3.4]{mg}, that any $\mathrm{SL}_k$-frieze of width $n-k-1$ determines a point on the Grassmannian $\mathrm{Gr}(k,n)$. Conversely, it was shown in \cite[Theorem 3.1]{bfgst} that any cluster in $\mathcal{S}_{3,n}$ gives rise to an $\mathrm{SL}_3$-frieze pattern.
We also refer to \cite[Section 5]{mc} for an explanation of this connection.

\begin{defn}\label{dollazz}
Let $\mathcal{P}$ be a $\mathrm{SL}_k$-frieze pattern. Define the \emph{forwards quiddity element at $i$} to be $p_{(i-1)\widehat{i}(i+1)\cdots(i+k-1)}$ and likewise the \emph{reverse quiddity element at $i$} to be $p_{(i-k+1)\cdots(i-1)\widehat{i}(i+1)}$. A \emph{forwards (respectively reverse) quiddity sequence of order $n$} is the list of forwards (respectively reverse) quiddity elements at $i$ for $1\leq i\leq n$. 
\end{defn}

By Proposition \ref{supertri}, a rectangular cluster in $\mathcal{S}_{k,n}$ determines both a lower and an upper $2$-superimposed triangulation.
Let $\mathcal{C}\in\mathcal{S}_{k,n}$ be a rectangular cluster of quadrilateral type. Scott \cite[Theorem 1]{sco} proves that the $k$-subsets in $\mathcal{C}$ consist of all possible double-interval $k$-subsets of $\{1,2,\cdots, n\}$ whose lower arcs appear in the snake triangulation of a $n$-gon (having preassigned a cyclic ordering of the vertices of the $n$-gon).


\begin{eg}\label{eg1ii}

The snake triangulation of a hexagon may be labelled as follows:

\begin{tikzpicture}[scale=2.5]
\draw (0.,4.)-- (-1.,4.);
\draw (-1.,4.)-- (-1.6234898018587332,4.7818314824680295);
\draw (-1.6234898018587332,4.7818314824680295)-- (-1.4009688679024186,5.756759394649853);
\draw (-1.4009688679024186,5.756759394649853)-- (-0.5,6.19064313376741);
\draw (-0.5,6.19064313376741)-- (0.40096886790241903,5.756759394649852);
\draw (0.40096886790241903,5.756759394649852)-- (0.6234898018587329,4.781831482468029);
\draw (0.6234898018587329,4.781831482468029)-- (0.,4.);
\draw (-1.4009688679024186,5.756759394649853)-- (0.40096886790241903,5.756759394649852);
\draw (0.40096886790241903,5.756759394649852)-- (-1.6234898018587332,4.7818314824680295);
\draw (-1.6234898018587332,4.7818314824680295)-- (0.6234898018587329,4.781831482468029);
\draw (0.6234898018587329,4.781831482468029)-- (-1.,4.);
\begin{scriptsize}
\draw [fill=black] (-1.,4.) circle (0.5pt);
\draw[color=black] (-1.14,3.8) node {3};
\draw [fill=black] (0.,4.) circle (0.5pt);
\draw[color=black] (0.14,3.8) node {4};
\draw [fill=black] (0.6234898018587329,4.781831482468029) circle (0.5pt);
\draw[color=black] (0.76,4.98) node {5};
\draw [fill=black] (0.40096886790241903,5.756759394649852) circle (0.5pt);
\draw[color=black] (0.54,5.96) node {6};
\draw [fill=black] (-0.5,6.19064313376741) circle (0.5pt);
\draw[color=black] (-0.5,6.39) node {7};
\draw [fill=black] (-1.4009688679024186,5.756759394649853) circle (0.5pt);
\draw[color=black] (-1.54,5.96) node {1};
\draw [fill=black] (-1.6234898018587332,4.7818314824680295) circle (0.5pt);
\draw[color=black] (-1.76,4.98) node {2};
\end{scriptsize}
\end{tikzpicture}

The double-interval subsets whose lower arcs coincide with one of the diagonals of this triangulation are:
\begin{align*}
&\{12,6\} &\{23,6\} && \{23,5\}\\
&\{2,67\} & \{2,56\} &&\{3,56\}
\end{align*}
which is a maximal collection of non-interval, pairwise non-crossing $3$-subsets of $\{1,2,\cdots,7\}$.

\end{eg}

Let $\mathcal{P}$ be the rectangular $(3,n)$-frieze pattern which arises from a rectangular cluster $\mathcal{C}\in\mathcal{S}_{k,n}$. Define the \emph{lower quiddity sequence of $\mathcal{P}$} to be the sequence of $\{1+L(i)|1\leq i\leq n\}$, where $L(i)$ is the number of lower arcs of $\mathcal{C}$ containing $i$. The \emph{upper quiddity sequence of $\mathcal{P}$} is defined similarly. 
\begin{eg}
The maximal collection of non-consecutive, pairwise non-crossing $3$-subsets of $\{1,2,\cdots,7\}$ from Example \ref{eg1ii} determines the following lower superimposed triangulation. The lower quiddity sequence is indicated by the numbers on the outside of the hexagon. 

\begin{tikzpicture}[scale=2.5]
\draw (0.,4.)-- (-1.,4.);
\draw (-1.,4.)-- (-1.6234898018587332,4.7818314824680295);
\draw (-1.6234898018587332,4.7818314824680295)-- (-1.4009688679024186,5.756759394649853);
\draw (-1.4009688679024186,5.756759394649853)-- (-0.5,6.19064313376741);
\draw (-0.5,6.19064313376741)-- (0.40096886790241903,5.756759394649852);
\draw (0.40096886790241903,5.756759394649852)-- (0.6234898018587329,4.781831482468029);
\draw (0.6234898018587329,4.781831482468029)-- (0.,4.);
\draw (-1.4009688679024186,5.756759394649853)-- (0.40096886790241903,5.756759394649852);
\draw (-1.6234898018587332,4.7818314824680295)-- (0.6234898018587329,4.781831482468029);
\draw (0.6234898018587329,4.781831482468029)-- (-1.,4.);
\draw (0.40096886790241903,5.756759394649852)-- (-1.6234898018587332,4.7818314824680295);
\draw (-1.457580864995378,4.823200393460328)-- (0.4090179807712385,5.712522356626621);
\draw (-1.457580864995378,4.823200393460328)-- (0.6142461261173064,4.823200393460328);
\begin{scriptsize}
\draw [fill=black] (-1.,4.) circle (0.5pt);
\draw[color=black] (-1.14,3.8) node {2};
\draw [fill=black] (0.,4.) circle (0.5pt);
\draw[color=black] (0.14,3.8) node {1};
\draw [fill=black] (0.6234898018587329,4.781831482468029) circle (0.5pt);
\draw[color=black] (0.76,4.98) node {4};
\draw [fill=black] (0.40096886790241903,5.756759394649852) circle (0.5pt);
\draw[color=black] (0.45,5.96) node {4};
\draw [fill=black] (-0.5,6.19064313376741) circle (0.5pt);
\draw[color=black] (-0.5,6.39) node {1};
\draw [fill=black] (-1.4009688679024186,5.756759394649853) circle (0.5pt);
\draw[color=black] (-1.54,5.96) node {2};
\draw [fill=black] (-1.6234898018587332,4.7818314824680295) circle (0.5pt);
\draw[color=black] (-1.76,4.98) node {5};
\end{scriptsize}

\end{tikzpicture}
\end{eg}

If a cluster $\mathcal{C}\in\mathcal{S}_{k,n}$ is of quadrilateral type, we say the $\mathrm{SL}_3$-frieze pattern it determines is also of quadrilateral type.

\begin{theorem}\label{quid}
For a  given $\mathrm{SL}_3$-frieze pattern of quadrilateral type, its lower quiddity sequence is the same as its reverse quiddity sequence. Likewise, its upper quiddity sequence is the same as its forwards quiddity sequence.
\end{theorem}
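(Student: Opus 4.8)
The plan is to translate everything into Plücker coordinates at a single point of $\mathrm{Gr}(3,n)$ and then to run a Conway--Coxeter--style recursion steered by the snake triangulation.

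First I would use the correspondence between clusters in $\mathcal{S}_{3,n}$ and $\mathrm{SL}_3$-friezes (\cite{bfgst}, see also \cite[Section 5]{mc}): the frieze $\mathcal{P}$ attached to $\mathcal{C}$ has as its entries the Pl\"ucker coordinates evaluated at the point $x_{\mathcal{C}}\in\mathrm{Gr}(3,n)$ normalised by $p_I(x_{\mathcal{C}})=1$ for all $I\in\mathcal{C}$; in particular $p_I(x_{\mathcal{C}})=1$ for every interval subset, these being frozen, so every consecutive $3\times 3$ minor of a matrix representing $x_{\mathcal{C}}$ equals $1$. Reading off which triple labels which position, the entry at position $i$ of the first (resp.\ last) non-trivial row of $\mathcal{P}$ is $p_{\{i-1,i+1,i+2\}}(x_{\mathcal{C}})$ (resp.\ $p_{\{i-2,i-1,i+1\}}(x_{\mathcal{C}})$), i.e.\ the forwards (resp.\ reverse) quiddity element at $i$ of Definition \ref{dollazz}. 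Since $\{i-1,i+1,i+2\}$ is exactly the double-interval subset whose lower arc $(i-1,i+1)$ encircles the single vertex $i$, and $\{i-2,i-1,i+1\}$ the one whose upper arc $(i-1,i+1)$ encircles $i$, the theorem is equivalent to the pair of identities
\[
p_{\{i-1,i+1,i+2\}}(x_{\mathcal{C}})=1+L(i),\qquad p_{\{i-2,i-1,i+1\}}(x_{\mathcal{C}})=1+U(i),
\]
the precise matching of first/last rows with lower/upper arc counts being understood up to this identification of conventions. Moreover the order-reversing involution $j\mapsto -j$ of $\{1,\dots,n\}$ sends a rectangular quadrilateral-type cluster to another such cluster, interchanges lower and upper arcs, and interchanges the forwards and reverse quiddity elements at corresponding vertices; hence the second identity follows from the first applied to the reversed cluster, and it suffices to prove $p_{\{i-1,i+1,i+2\}}(x_{\mathcal{C}})=1+L(i)$.

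Next I would make this concrete. Fix a $3\times n$ matrix $(v_1\mid\cdots\mid v_n)$ representing $x_{\mathcal{C}}$ with all consecutive $3\times 3$ minors equal to $1$; then $v_{j+3}=v_j+\beta_j v_{j+1}+\alpha_j v_{j+2}$ for scalars $\alpha_j,\beta_j$, and expanding the relevant determinant along this relation gives at once $p_{\{i-1,i+1,i+2\}}(x_{\mathcal{C}})=|\beta_{i-1}|$ (and, for later use, $p_{\{i-2,i-1,i+1\}}(x_{\mathcal{C}})=\alpha_{i-2}$). The remaining defining conditions of $x_{\mathcal{C}}$, namely $\det(v_a\mid v_{a+1}\mid v_b)=1$ for the row-$1$ subsets of the lattice interior of $\mathcal{C}$ and $\det(v_a\mid v_b\mid v_{b+1})=1$ for the row-$2$ ones, become — after expanding $v_b$ through the recursion — continuant identities in the $\alpha_j,\beta_j$ indexed by the diagonals of the snake triangulation $T$ of the $n$-gon underlying $\mathcal{C}$ (Scott; cf.\ Example \ref{eg1ii}).

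The heart of the argument, which I expect to be the main obstacle, is to extract $|\beta_{i-1}|=1+L(i)$ from these relations; I would do this by induction on $L(i)$. If $L(i)=0$ one checks that vertex $i$ meets no diagonal of $T$, hence lies in the single triangle $\{i-1,i,i+1\}$ of $T$, so $(i-1,i+1)\in T$ and $\{i-1,i+1,i+2\}\in\mathcal{C}$, whence $p_{\{i-1,i+1,i+2\}}(x_{\mathcal{C}})=1$. If $L(i)\ge 1$, pick the diagonal $(i,d)$ of $T$ incident to $i$ closest to the edge $(i,i+1)$, apply the three-term Pl\"ucker relation $p_{Iac}p_{Ibd}=p_{Iab}p_{Icd}+p_{Iad}p_{Ibc}$ (with a suitably chosen singleton $I$) in the shape that makes two of its four factors interval minors, hence equal to $1$, and use Proposition \ref{clq} to recognise the remaining factors; the relation then expresses $p_{\{i-1,i+1,i+2\}}(x_{\mathcal{C}})$ as $p_{J'}(x_{\mathcal{C}})+1$, where $J'$ is again a double-interval subset whose lower arc encircles a vertex through which exactly one fewer lower arc of $\mathcal{C}$ passes --- in effect $(i,d)$ has been peeled off. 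The quadrilateral/snake hypothesis is essential here: it is what forces the intermediate subsets to remain double-interval and the recursion to close, and the delicate point is to control, cyclically, which form of the Pl\"ucker relation to use at each stage and to verify that the two ``interval factors" really are intervals. After finitely many steps $L$ is reduced to $0$, giving $|\beta_{i-1}|=1+L(i)$; the upper/reverse identity then follows from the reversal symmetry noted above. (One can equally well run the induction on $n$, deleting a vertex with $L=0$, which on the combinatorial side amounts to removing an ear of $T$.)
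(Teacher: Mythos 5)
Your proposal is correct and takes essentially the same route as the paper: both reduce the theorem to the identity $p_{(i-1)(i+1)(i+2)}(x_{\mathcal{C}})=1+L(i)$ and prove it by repeatedly applying the three-term Pl\"ucker relation so that two of the four factors equal $1$ (being interval subsets or cluster members), peeling off one lower arc per step until the base case $\{i-1,i+1,i+2\}\in\mathcal{C}$, with the upper/forwards half obtained by reversing orientation. The paper simply carries out your induction on $L(i)$ as an explicit chain of such relations using the explicit description of the quadrilateral-type cluster; your matrix/continuant framing is additional scaffolding that neither proof actually needs.
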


  \begin{proof}
Let  $\mathcal{C}\in\mathcal{S}_{3,n}$ be the rectangular cluster that gives rise to an $\mathrm{SL}_3$-frieze pattern $\mathcal{P}$ that is of quadrilateral type. 
Consider the lower superimposed triangulation first - we calculate $L(i)$, the number of lower arcs that contain each vertex $i$. The cluster $\mathcal{C}$ has an explicit description as the subsets:
\begin{align*}
& \{i,i+1,n-i\} &&\forall 1\leq i<n/2-1\\
& \{i,i+1,n-i+1\} &&\forall 2\leq i<n/2-1\\
&\{i,n-i,n-i+1\} &&\forall 2\leq i<n/2-1\\
&\{i,n-i+1,n-i+2\} &&\forall 2\leq i<n/2
\end{align*}

First, it is clear that $\{i-1,i+1,i+2\}$ is crossing with $\{i,i+1,n-i\}$, $\{i,i+1,n+1-i\}$, $\{i,n-i,n-i+1\}$ or  $\{i,n-i+1,n-i+2\}$ if and only if they are themselves non-interval subsets. 

So we claim $p_{(i-1)(i+1)(i+2)}=L(i)+1$. This is certainly true if $L(i)=0$ - then $\{i-1,i+1,i+2\}\in\mathcal{C}$.

If $i$ is such that $L(i)=4$, then \begin{align*}
p_{(i-1)(i+1)(i+2)}&=p_{i(i+1)(n-i)}{p_{(i-1)(i+1)(i+2)}}\\&={p_{(i-1)(i+1)(n-i)}}p_{i(i+1)(i+2)}+p_{(i-1)i(i+1)}p_{(i+1)(i+2)(n-i)}\\&=p_{(i-1)(i+1)(n-i)}+1\\
p_{(i-1)(i+1)(n-i)}&=p_{i(i+1)(n-i+1)}{p_{(i-1)(i+1)(n-i)}}\\&={p_{(i-1)(i+1)(n-i+1)}}p_{i(i+1)(n-i)}+p_{(i+1)(n-i)(n-i+1)}p_{(i-1)i(i+1)}\\&=p_{(i-1)(i+1)(n-i+1)}+1\\
p_{(i-1)(i+1)(n-i+1)}&=p_{i(n-i)(n-i+1)}{p_{(i-1)(i+1)(n-i+1)}}\\&={p_{(i-1)(n-i)(n-i+1)}}p_{i(i+1)(n-i+1)}\\&+p_{(i-1)i(n-i+1)}p_{(i+1)(n-i)(n-i+1)}\\&=p_{i(i+1)(n-i+1)}+1\\
p_{i(i+1)(n-i+1)}&=p_{i(n-i+1)(n-i+2)}{p_{(i-1)(n-i)(n-i+1)}}\\&=p_{(i-1)(n-i+1)(n-i+2)}p_{i(n-i)(n-i+1)}\\&+p_{(n-i)(n-i+1)(n-i+2)}p_{(i-1)i(n-i+1)}\\&=2,
\end{align*}
This shows that $5=p_{(i-1)(i+1)(i+2)}=L(i)+1$. 
Now let $L(i)=3$, and assume without loss of generality that $i=n-1$. In this case $\{i,i+1,n-i\} =\{n-1,n,1\}$ is an interval, and a similar calculation to the above shows $p_{(i-1)(i+1)(i+2)}=4$. Continuing in this fashion shows that, if $L(i)=2$ or $L(i)=1$, then also $p_{(i-1)(i+1)(i+2)}=L(i)+1$, which shows the lower quiddity sequence is the same as the reverse quiddity sequence. Reversing the orientation shows that the upper quiddity sequence is the same as the forwards quiddity sequence. 
\end{proof}



\bibliographystyle{amsplain}

\bibliography{citat1ons}

\section{Acknowledgements}
This paper was completed as part of my PhD studies, with the support of the Austrian Science Fund (FWF): W1230.

\end{document}